\newcommand{\Rmnum}[1]{\expandafter\@slowromancap\romannumeral #1@}
\newtheorem{theorem}{Theorem}[section]
\newtheorem{lemma}{Lemma}[section]
\title{{Low-Rank Matrix Approximations with \\ Flip-Flop Spectrum-Revealing QR Factorization}\thanks{This work was funded by the CSC (grant 201606310121).}}
\author{
Yuehua Feng\thanks{School of Mathematical Science, Xiamen University, China. E-mail: {\tt fyh1001@hotmail.com}.}
\and Jianwei Xiao\thanks{Department of Mathematics, University of California, Berkeley. E-mail: {\tt jwxiao@berkeley.edu}.}
\and Ming Gu\thanks{Department of Mathematics, University of California, Berkeley. E-mail: {\tt mgu@math.berkeley.edu}.}
}
\begin{document}

\maketitle
\begin{abstract}
We present Flip-Flop Spectrum-Revealing QR (Flip-Flop SRQR) factorization, a significantly faster and more reliable variant of the QLP factorization of Stewart, for low-rank matrix approximations. Flip-Flop SRQR uses SRQR factorization to initialize a partial column pivoted QR factorization and then compute a partial LQ factorization. As observed by Stewart in his original QLP work, Flip-Flop SRQR tracks the exact singular values with ``considerable fidelity''. We develop singular value lower bounds and residual error upper bounds for Flip-Flop SRQR factorization. In situations where singular values of the input matrix decay relatively quickly, the low-rank approximation computed by SRQR is guaranteed to be as accurate as truncated SVD. We also perform a complexity analysis to show that for the same accuracy, Flip-Flop SRQR is faster than randomized subspace iteration for approximating the SVD, the standard method used in Matlab tensor toolbox. We also compare Flip-Flop SRQR with alternatives on two applications, tensor approximation and nuclear norm minimization, to demonstrate its efficiency and effectiveness. 
\end{abstract}

\medskip
{\small
{\bf Keywords}: QR factorization, randomized algorithm, low-rank approximation, approximate SVD, higher-order SVD, nuclear norm minimization

\medskip
{\bf AMS subject classifications. 15A18, 15A23, 65F99}
}

\section{Introduction}
The singular value decomposition (SVD) of a matrix $A \in \mathbb{R}^{m\times n}$ is the factorization of $A$ into the product of three matrices
$A = U \Sigma V^T$ where $U = \left(u_1,\cdots,u_m\right) \in \mathbb{R}^{m \times m}$ and $V = \left(v_1,\cdots,v_n\right)\in \mathbb{R}^{n \times n}$ are orthogonal singular vector matrices and $\Sigma \in \mathbb{R}^{m \times n}$ is a rectangular diagonal matrix with non-increasing non-negative singular values $\sigma_i\left(1 \le i \le \min\left(m,n\right)\right)$ on the diagonal. The SVD has become a critical analytic tool in large data analysis and machine learning \cite{alter2000singular,elden2007matrix,narwaria2012svd}.

Let $\mbox{Diag}\left(x\right)$ denote the diagonal matrix with vector $x \in \mathbb{R}^n$ on its diagonal. For any $1 \le k \le \min\left(m,n\right)$, 
the rank-$k$ truncated SVD of A is defined by 
$$A_k \stackrel{def}{=} \left(u_1,\cdots,u_k\right) \mbox{Diag}\left(\sigma_1,\cdots,\sigma_k\right) \left(v_1,\cdots,v_k\right)^T.$$ 
The rank-$k$ truncated SVD turns out to be the best rank-$k$ approximation to $A$, as explained by Theorem \ref{Thm: best rank-k approximation to A}.

\begin{theorem} \label{Thm: best rank-k approximation to A}
(Eckart--Young--Mirsky Theorem \cite{eckart1936approximation,golub2012matrix}).
\begin{align*}
\min_{rank\left(C\right) \le k} \|A-C\|_2 &= \|A-A_k\|_2 = \sigma_{k+1}, \\
\min_{rank\left(C\right) \le k} \|A-C\|_F &= \|A-A_k\|_F = \sqrt{\sum_{j=k+1}^{\min\left(m,n\right)} \sigma_j^2},
\end{align*}
where $\|\cdot \|_2$ and $\|\cdot \|_F$ denote the $l_2$ operator norm and the Frobenius norm respectively. 
\end{theorem}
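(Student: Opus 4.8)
The plan is to prove each equality by pairing an upper bound, achieved by the truncated SVD $A_k$, with a matching lower bound valid for every competitor of rank at most $k$. The upper bounds are immediate. Writing $A - A_k = U\left(\Sigma - \Sigma_k\right)V^T$, where $\Sigma_k$ is the $m \times n$ matrix agreeing with $\Sigma$ in its first $k$ diagonal entries and zero elsewhere, the residual $A - A_k$ is itself in SVD form with nonzero singular values exactly $\sigma_{k+1}, \sigma_{k+2}, \ldots, \sigma_{\min(m,n)}$. Since the spectral norm of a matrix is its largest singular value and the Frobenius norm is the root-sum-of-squares of its singular values, this gives $\|A - A_k\|_2 = \sigma_{k+1}$ and $\|A - A_k\|_F = \sqrt{\sum_{j=k+1}^{\min(m,n)} \sigma_j^2}$ at once.

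For the spectral-norm lower bound, fix any $C$ with $\mathrm{rank}(C) \le k$, so its kernel has dimension at least $n - k$ inside $\mathbb{R}^n$. This kernel and the $(k+1)$-dimensional subspace $\mathrm{span}(v_1, \ldots, v_{k+1})$ have dimensions summing to more than $n$, so they share a unit vector $w$. Because $Cw = 0$, expanding $w = \sum_{i=1}^{k+1} \alpha_i v_i$ yields $(A - C)w = Aw = \sum_{i=1}^{k+1} \alpha_i \sigma_i u_i$, so that $\|(A - C)w\|_2^2 = \sum_{i=1}^{k+1} \alpha_i^2 \sigma_i^2 \ge \sigma_{k+1}^2 \sum_{i=1}^{k+1}\alpha_i^2 = \sigma_{k+1}^2$, using that the singular values are non-increasing and $\|w\|_2 = 1$. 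Hence $\|A - C\|_2 \ge \|(A-C)w\|_2 \ge \sigma_{k+1}$, matching the upper bound.

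The Frobenius-norm lower bound is where the real work lies, and I would obtain it from Weyl's perturbation inequality for singular values, $\sigma_{i+j-1}(A) \le \sigma_i(A - C) + \sigma_j(C)$. Choosing $j = k+1$ and noting $\sigma_{k+1}(C) = 0$ since $\mathrm{rank}(C) \le k$, this collapses to $\sigma_{i+k}(A) \le \sigma_i(A - C)$ for every $i \ge 1$. Squaring and summing over $i$ then gives $\|A - C\|_F^2 = \sum_i \sigma_i(A - C)^2 \ge \sum_i \sigma_{i+k}(A)^2 = \sum_{j=k+1}^{\min(m,n)} \sigma_j(A)^2$, which is exactly the squared upper bound.

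I expect the Frobenius case to be the main obstacle: the clean dimension-counting argument that settles the spectral norm controls only one direction, whereas the Frobenius norm couples all of the trailing singular values, and the needed leverage comes entirely from the subadditivity packaged in Weyl's inequalities. A fully self-contained alternative would instead bound $\|A - C\|_F$ by analyzing the orthogonal projection of $A$ onto the column space of $C$ and optimizing over rank-$k$ subspaces, but the Weyl route is the shortest and has the bonus of extending verbatim to every unitarily invariant norm, i.e. to Mirsky's general theorem.
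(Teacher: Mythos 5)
The paper does not prove this statement: Theorem~\ref{Thm: best rank-k approximation to A} is quoted as the classical Eckart--Young--Mirsky theorem with citations to \cite{eckart1936approximation,golub2012matrix}, and the result is used only as background, so there is no in-paper proof to compare yours against. That said, your proposal is a correct and complete proof, and it follows the standard literature treatment. The upper bounds are right: $A-A_k = U\left(\Sigma-\Sigma_k\right)V^T$ is already an SVD of the residual, giving both norms immediately. Your dimension-counting argument for the spectral lower bound (a unit vector $w$ in $\ker\left(C\right)\cap\mathrm{span}\left(v_1,\dots,v_{k+1}\right)$, which is nonempty since $\left(n-k\right)+\left(k+1\right)>n$) is exactly the classical proof found in Golub and Van Loan. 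For the Frobenius case, invoking Weyl's singular value inequality $\sigma_{i+j-1}\left(X+Y\right)\le \sigma_i\left(X\right)+\sigma_j\left(Y\right)$ with $j=k+1$, so that $\mathrm{rank}\left(C\right)\le k$ forces $\sigma_{k+1}\left(C\right)=0$ and hence $\sigma_{i+k}\left(A\right)\le\sigma_i\left(A-C\right)$, is Mirsky's route; summing squares over $1\le i\le \min\left(m,n\right)-k$ gives the claimed bound, and as you note it generalizes verbatim to all unitarily invariant norms. Two trivial points to make explicit if you write this up: the statement implicitly assumes $k<\min\left(m,n\right)$ so that $\sigma_{k+1}$ is defined (with the conventions $\sigma_j=0$ for $j>\min\left(m,n\right)$ everything degenerates correctly), and Weyl's inequality itself deserves either a citation or a short proof, since it carries the entire weight of the Frobenius lower bound.
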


However, due to the prohibitive costs in computing the rank-$k$ truncated SVD, in practical applications one typically computes a rank-$k$ approximate SVD which satisfies some tolerance requirements \cite{drineas2006fast,gu2015subspace,halko2011finding,larsen1998lanczos,stewart1999qlp}. Then rank-$k$ approximate SVD has been applied to many research areas including principal component analysis (PCA) \cite{jolliffe1986principal,rokhlin2009randomized}, web search models \cite{kleinberg1999authoritative}, information retrieval \cite{berry1995using,furnas1988information}, and face recognition \cite{muller2004singular,turk1991eigenfaces}. 

Among assorted SVD approximation algorithms, the pivoted QLP decomposition proposed by Stewart \cite{stewart1999qlp} is an effective and efficient one. The pivoted QLP decomposition is obtained by computing a QR factorization with column pivoting \cite{businger1965linear,golub1965numerical} on $A$ to get an upper triangular factor $R$ and then computing an LQ factorization on $R$ to get a lower triangular factor $L$. Stewart's key numerical observation is that the diagonal elements of $L$ track the singular values of $A$ with ``considerable fidelity'' no matter the matrix $A$. The pivoted QLP decomposition is extensively analyzed in Huckaby and Chan \cite{HuckabyChan2003,HuckabyChan2005}. More recently, Deursch and Gu developed a much more efficient variant of the pivoted QLP decomposition, TUXV, and demonstrated its remarkable quality as a low-rank approximation empirically without a rigid justification of TUXV's success theoretically \cite{duersch2017randomized}. 

In this paper, we present Flip-Flop SRQR, a slightly different variant of TUXV of Deursch and Gu \cite{duersch2017randomized}. Like TUXV, Flip-Flop SRQR performs most of its work in computing a partial QR factorization using truncated randomized QRCP (TRQRCP) and a partial LQ factorization. Unlike TUXV, however, Flip-Flop SRQR also performs additional computations to ensure a spectrum-revealing QR factorization (SRQR) \cite{xiao2017fast} before the partial LQ factorization. 

We demonstrate the remarkable theoretical quality of this variant as a low-rank approximation, and its highly competitiveness with state-of-the-art low-rank approximation methods in real world applications in both low-rank tensor compression \cite{de2000multilinear,kolda2009tensor,savas2007handwritten,vannieuwenhoven2012new} and nuclear norm minimization \cite{cai2010singular,lin2010augmented,ma2011fixed,oh2015fast,toh2010accelerated}. 

The rest of this paper is organized as follows: In Section \ref{Sec:Prelim} we introduce the TRQRCP algorithm, the spectrum-revealing QR factorization, low-rank tensor compression, and nuclear norm minimization. In Section \ref{Sec:FFQR}, we introduce Flip-Flop SRQR and analyze its computational costs and low-rank approximation properties. In Section \ref{Sec:Num}, we present numerical experimental results comparing Flip-Flop SRQR with state-of-the-art low-rank approximation methods. 

\section{Preliminaries and Background}\label{Sec:Prelim}
\subsection{Partial QRCP}

\begin{algorithm}
\caption{Partial QRCP}\label{Alg: Partial QRCP}
\begin{algorithmic}
\STATE $\textbf{Inputs:}$
\STATE Matrix $A \in \mathbb{R}^{m \times n}$. Target rank $k$.
\STATE $\textbf{Outputs:}$
\STATE Orthogonal matrix $Q \in \mathbb{R}^{m \times m}$.
\STATE Upper trapezoidal matrix $R \in \mathbb{R}^{m \times n}$. 
\STATE Permutation matrix $\Pi \in \mathbb{R}^{n \times n}$ such that $A \Pi = QR$.
\STATE \textbf{Algorithm:}
\STATE Initialize $\Pi = I_n$. Compute column norms $r_s = \|A\left(1:m,s\right)\|_2~\left(1\le s \le n\right)$.
\FOR{$j=1:k$}
\STATE Find $i = \arg \max\limits_{j\le s \le n} r_s$. Exchange $r_j$ and $r_i$ columns in $A$ and $\Pi$.
\STATE Form Householder reflection $Q_j$ from $A\left(j:m,j\right)$. 
\STATE Update trailing matrix $A\left(j:m,j:n\right) \leftarrow Q_j^T A\left(j:m,j:n\right)$.
\STATE Update $r_s = \|A\left(j+1:m,s\right)\|_2~\left(j+1 \le s \le n\right)$.
\ENDFOR
\STATE $Q = Q_1 Q_2 \cdots Q_k$ is the product of all reflections. $R = $ upper trapezoidal part of $A$.
\end{algorithmic}
\end{algorithm}
The QR factorization of a matrix $A \in \mathbb{R}^{m \times n}$ is $A = QR$ with orthogonal matrix $Q \in \mathbb{R}^{m \times m}$ and upper trapezoidal matrix $R \in \mathbb{R}^{m \times n}$, which can be computed by LAPACK \cite{anderson1999lapack} routine xGEQRF, where x stands for the matrix data type. The standard QR factorization is not suitable for some practical situations where either the matrix $A$ is rank deficient or only representative columns of $A$ are of interest. Usually the QR factorization with column pivoting (QRCP) is adequate for the aforementioned situations except a few rare examples such as the Kahan matrix \cite{golub2012matrix}. Given a matrix $A \in \mathbb{R}^{m \times n}$, the QRCP of matrix $A$ has the form
\begin{equation*}
A \Pi = QR,
\end{equation*}
where $\Pi \in \mathbb{R}^{n \times n}$ is a permutation matrix, $Q \in \mathbb{R}^{m \times m}$ is an orthogonal matrix, and $R \in \mathbb{R}^{m \times n}$ is an upper trapezoidal matrix. QRCP can be computed by LAPACK \cite{anderson1999lapack} routines xGEQPF and xGEQP3, where xGEQP3 is a more efficient blocked implementation of xGEQPF. For given target rank $k~\left(1 \le k \le \min\left(m,n\right)\right)$, the partial QRCP factorization has a $2 \times 2$ block form 
\begin{equation}
A \Pi = Q \begin{pmatrix}R_{11} & R_{12}\\ & R_{22}\end{pmatrix} = \begin{pmatrix}Q_1 & Q_2\end{pmatrix} \begin{pmatrix}R_{11} & R_{12}\\ & R_{22}\end{pmatrix}, \label{Eqn:RRQR}
\end{equation}
where $R_{11} \in \mathbb{R}^{k \times k}$ is upper triangular. The details of partial QRCP are covered in Algorithm \ref{Alg: Partial QRCP}. The partial QRCP computes an approximate column subspace of $A$ spanned by the leading $k$ columns in $A \Pi$, up to the error term in $R_{22}$. Equivalently, \eqref{Eqn:RRQR} yields a low rank approximation 
\begin{equation}\label{Eqn:RRQR2}
A \approx Q_1 \begin{pmatrix}R_{11} & R_{12}\end{pmatrix} \Pi^T, 
\end{equation}
with approximation quality closely related to the error term in $R_{22}$. 

\begin{algorithm}
\caption{Truncated Randomized QRCP (TRQRCP)}\label{Alg: TRQRCP}
\begin{algorithmic}
\STATE $\textbf{Inputs:}$
\STATE Matrix $A \in \mathbb{R}^{m \times n}$. Target rank $k$. Block size $b$. Oversampling size $p\ge 0$.
\STATE $\textbf{Outputs:}$
\STATE Orthogonal matrix $Q \in \mathbb{R}^{m \times m}$.
\STATE Upper trapezoidal matrix $R \in \mathbb{R}^{k \times n}$. 
\STATE Permutation matrix $\Pi \in \mathbb{R}^{n \times n}$ such that $A \Pi \approx Q\left(:,1:k\right)R$.
\STATE $\textbf{Algorithm:}$
\STATE Generate i.i.d. Gaussian random matrix $\Omega \in \mathcal{N}\left(0,1\right)^{\left(b+p\right) \times m}$.
\STATE Form the initial sample matrix $B = \Omega A$ and initialize $\Pi = I_n$.
\FOR{$j=1:b:k$}
\STATE $b = \min\left(k-j+1,b\right)$.
\STATE Do partial QRCP on $B\left(:,j:n\right)$ to obtain $b$ pivots.
\STATE Exchange corresponding columns in $A,~B,~\Pi$ and $W^T$.
\STATE Do QR on $A\left(j:m,j:j+b-1\right)$ using WY formula without updating the trailing matrix.
\STATE Update $B\left(:,j+b:n\right)$.
\ENDFOR
\STATE $Q = Q_1 Q_2 \cdots Q_{\lceil k/b \rceil}$. $R = $ upper trapezoidal part of the submatrix $A\left(1:k,1:n\right)$.
\end{algorithmic}
\end{algorithm}
The Randomized QRCP (RQRCP) algorithm \cite{duersch2017randomized,xiao2017fast} is a more efficient variant of Algorithm \ref{Alg: Partial QRCP}. RQRCP generates a Gaussian random matrix $\Omega \in \mathcal{N}\left(0,1\right)^{(b+p) \times m}$ with $b + p \ll m$,  where the entries of $\Omega$ are independently sampled from normal distribution, to compress $A$ into $B = \Omega A$ with much smaller row dimension. In practice, $b$ is the block size and $p$ is the oversampling size. RQRCP repeatedly runs partial QRCP on $B$ to obtain $b$ column pivots, applies them to the matrix $A$, and then computes QR without pivoting (QRNP) on $A$ and updates the remaining columns of $B$. RQRCP exits this process when it reaches the target rank $k$. QRCP and RQRCP choose pivots on $A$ and $B$ respectively. RQRCP is significantly faster than QRCP as $B$ has much smaller row dimension than $A$. It is shown in \cite{xiao2017fast} that RQRCP is as reliable as QRCP up to failure probabilities that decay exponentially with respect to the oversampling size $p$.

Since the trailing matrix of $A$ is usually not required for low-rank matrix approximations (see \eqref{Eqn:RRQR2}), the TRQRCP (truncated RQRCP) algorithm of \cite{duersch2017randomized} re-organizes the computations in RQRCP to directly compute the approximation \eqref{Eqn:RRQR2}
without explicitly computing the trailing matrix $R_{22}$. For more details, both RQRCP and TRQRCP are based on the $WY$ representation of the Householder transformations  \cite{bischof1987wy,quintana1998blas,schreiber1989storage}: 
\begin{equation*}
Q = Q_1 Q_2 \cdots Q_k = I-YTY^T,
\end{equation*}
where $T \in \mathbb{R}^{k \times k}$ is an upper triangular matrix and $Y \in \mathbb{R}^{m \times k}$ is a trapezoidal matrix consisting of $k$ consecutive Householder vectors. Let $W^T \stackrel{def}{=} T^T Y^T A$, then the trailing matrix update formula becomes $Q^TA = A - YW^T$. The main difference between RQRCP and TRQRCP is that while RQRCP computes the whole trailing matrix update, TRQRCP only computes the part of the update that affects the approximation \eqref{Eqn:RRQR2}. More discussions about RQRCP and TRQRCP can be found in \cite{duersch2017randomized}. The main steps of TRQRCP are briefly described in Algorithm \ref{Alg: TRQRCP}. 

With TRQRCP, the TUXV algorithm (Algorithm 7 in \cite{duersch2017randomized}) computes a low-rank approximation with the QLP factorization at a greatly accelerated speed, by computing a partial QR factorization with column pivoting, followed with a partial LQ factorization. 

\subsection{Spectrum-revealing QR Factorization}
\begin{algorithm}
\caption{TUXV Algorithm}\label{Alg: TUXV}
\begin{algorithmic}[b]
\STATE $\textbf{Inputs:}$
\STATE Matrix $A \in \mathbb{R}^{m \times n}$. Target rank $k$. Block size $b$. Oversampling size $p\ge 0$.
\STATE $\textbf{Outputs:}$
\STATE Column orthonormal matrices $U \in \mathbb{R}^{m\times k},~V \in \mathbb{R}^{n \times k}$, and upper triangular matrix $R \in \mathbb{R}^{k\times k}$ such that $A \approx U R V^T$.
\STATE $\textbf{Algorithm:}$
\STATE Do TRQRCP on $A$ to obtain $Q \in \mathbb{R}^{m\times k},~R \in \mathbb{R}^{k\times n}$, and $\Pi \in \mathbb{R}^{n \times n}$.
\STATE $R = R \Pi^T$ and do LQ factorization, i.e., $[V,R] = qr(R^T,0)$.
\STATE Compute $Z = A V$ and do QR factorization, i.e., $[U,R] = qr(Z,0)$. 
\end{algorithmic}
\end{algorithm}
Although both RQRCP and TRQRCP are very effective practical tools for low-rank matrix approximations, they are not known to provide reliable low-rank matrix approximations due to their underlying greediness in column norm based pivoting strategy. To solve this potential problem of column based QR factorization, Gu and Eisenstat \cite{gu1996efficient} proposed an efficient way to perform additional column interchanges to enhance the quality of the leading $k$ columns in $A \Pi$ as a basis for the approximate column subspace. More recently, a more efficient and effective method, spectrum-revealing QR factorization (SRQR), was introduced and analyzed in \cite{xiao2017fast} to compute the low-rank approximation \eqref{Eqn:RRQR2}. The concept of spectrum-revealing, first introduced in \cite{xiao2016spectrum}, emphasizes the utilization of partial QR factorization  \eqref{Eqn:RRQR2} as a low-rank matrix approximation, as opposed to the more traditional rank-revealing factorization, which emphasizes the utility of the partial QR factorization  \eqref{Eqn:RRQR} as a tool for numerical rank determination. SRQR algorithm is described in Algorithm \ref{Alg: SRQR}. SRQR initializes a partial QR factorization using RQRCP or TRQRCP and then verifies an SRQR condition. If the SRQR condition fails, it will perform a pair-wise swap between a pair of leading column (one of first $k$ columns of $A \Pi$) and trailing column (one of the remaining columns). The SRQR algorithm will always run to completion with a high-quality 
low-rank matrix approximation \eqref{Eqn:RRQR2}. For real data matrices that usually have fast decaying singular-value spectrum, this approximation is often as good as the truncated SVD. The SRQR algorithm of \cite{xiao2017fast} explicitly updates the partial QR factorization \eqref{Eqn:RRQR} while swapping columns, but the SRQR algorithm can actually avoid any explicit computations on the trailing matrix $R_{22}$ using TRQRCP instead of RQRCP to obtain exactly the same partial QR initialization. Below we outline the SRQR algorithm. 

In \eqref{Eqn:RRQR}, let
\begin{equation}\label{Eqn:Rtilde}
\widetilde{R} \stackrel{def}{=} \begin{pmatrix}
R_{11} & a \\ & \alpha
\end{pmatrix}
\end{equation}
be the leading $\left(l+1\right) \times \left(l+1\right)$ submatrix of $R$. We define
\begin{equation}\label{Eqn:Rtilde2}
g_1 \stackrel{def}{=} \frac{\|R_{22}\|_{1,2}}{|\alpha|} \qquad \mbox{and} \qquad g_2 \stackrel{def}{=} |\alpha| \left\|\widetilde{R}^{-T}\right\|_{1,2},
\end{equation}
where $\|X\|_{1,2}$ is the largest column $2$-norm of $X$ for any given $X$. In \cite{xiao2017fast}, the authors proved approximation quality bounds involving $g_1, g_2$ for the low-rank approximation computed by RQRCP or TRQRCP. RQRCP or TRQRCP will provide a good low-rank matrix approximation if $g_1$ and $g_2$ are $O(1)$. The authors also proved that $g_1 \le \sqrt{\frac{1+\varepsilon}{1-\varepsilon}}$ and $g_2 \le \frac{\sqrt{2(1+\varepsilon)}}{1-\varepsilon} \left(1+\sqrt{\frac{1+\varepsilon}{1-\varepsilon}}\right)^{l-1}$ for RQRCP or TRQRCP, where $0 < \varepsilon < 1$ is a user-defined parameter which guides the choice of the oversampling size $p$. For reasonably chosen $\varepsilon$ like $\varepsilon = \frac{1}{2}$, $g_1$ is a small constant while $g_2$ can potentially be a extremely large number, which can lead to poor low-rank approximation quality. To avoid the potential exponential explosion of $g_2$, the SRQR algorithm (Algorithm \ref{Alg: SRQR}) proposed in \cite{xiao2017fast} uses a pair-wise swapping strategy to guarantee that $g_2$ is below some user defined tolerance $g > 1$ which is usually chosen to be a small number greater than one, like $2.0$.

\begin{algorithm}
\caption{Spectrum-revealing QR Factorization (SRQR)}\label{Alg: SRQR}
\begin{algorithmic}
\STATE $\textbf{Inputs:}$
\STATE Matrix $A \in \mathbb{R}^{m \times n}$. Target rank $k$. Block size $b$. Oversampling size $p\ge 0$. 
\STATE Integer $l \ge k$. Tolerance $g>1$ for $g_2$.
\STATE $\textbf{Outputs:}$
\STATE Orthogonal matrix $Q \in \mathbb{R}^{m \times m}$ formed by the first $k$ reflectors.
\STATE Upper trapezoidal matrix $R \in \mathbb{R}^{k \times n}$. 
\STATE Permutation matrix $\Pi \in \mathbb{R}^{n \times n}$ such that $A \Pi \approx Q\left(:,1:k\right)R$.
\STATE $\textbf{Algorithm:\,\,}$ 
\STATE Compute $Q, R, \Pi$ with RQRCP or TRQRCP to $l$ steps.
\STATE Compute squared 2-norm of the columns of $B(:,l+1:n): \widehat{r}_i \; (l+1 \le i \le n)$, where $B$ is a random projection of $A$ computed by RQRCP or TRQRCP.
\STATE Approximate squared 2-norm of the columns of $A(l+1:m,l+1:n): r_i = \widehat{r}_i/(b+p) \; (l+1 \le i \le n)$.
\STATE $\imath ={\bf argmax}_{l+1 \le i \le n}\{r_i\}$. 
\STATE Swap $\imath$-th and $(l+1)$-st columns of $A, \Pi, r$.
\STATE One-step QR factorization of $A(l+1:m,l+1:n)$.
\STATE $\left|\alpha\right| = R_{l+1,l+1}$.
\STATE $r_i = r_i - A(l+1,i)^2 \; (l+2 \le i \le n)$. 
\STATE Generate a random matrix $\Omega \in\mathcal{N}(0,1)^{d\times(l+1)} \; (d \ll l)$. 
\STATE Compute $g_2 = \left|\alpha\right| \left\|\widetilde{R}^{-T}\right\|_{1,2} \approx \frac{\left|\alpha\right|}{\sqrt{d}} \left\|\Omega \widetilde{R}^{-T}\right\|_{1,2}$.
\STATE \WHILE{$g_2 > g$}
\STATE $\imath ={\bf argmax}_{1 \le i \le l+1}\{i\text{th column norm of } \Omega \widetilde{R}^{-T}\}$.
\STATE Swap $\imath$-th and $(l+1)$-st columns of $A$ and $\Pi$ in a Round Robin rotation.
\STATE Givens-rotate $R$ back into upper-trapezoidal form.
\STATE $r_{l+1} = R_{l+1,l+1}^2$, $r_i = r_i + A(l+1,i)^2 \; (l+2 \le i \le n)$. 
\STATE $\imath ={\bf argmax}_{l+1 \le i \le n}\{r_i\}$. 
\STATE Swap $\imath$-th and $(l+1)$-st columns of $A, \Pi, r$.
\STATE One-step QR factorization of $A(l+1:m,l+1:n)$.
\STATE $\left|\alpha\right| = R_{l+1,l+1}$.
\STATE $r_i = r_i - A(l+1,i)^2 \; (l+2 \le i \le n)$.
\STATE Generate a random matrix $\Omega \in\mathcal{N}(0,1)^{d\times(l+1)} (d \ll l)$ .
\STATE Compute $g_2 = \left|\alpha\right| \left\|\widetilde{R}^{-T}\right\|_{1,2} \approx \frac{\left|\alpha\right|}{\sqrt{d}} \left\|\Omega \widetilde{R}^{-T}\right\|_{1,2}$.
\ENDWHILE
\end{algorithmic}
\end{algorithm}

\subsection{Tensor Approximation}
In this section we review some basic notations and concepts involving tensors. A more detailed discussion of the properties and applications of tensors can be found in the review \cite{kolda2009tensor}. A tensor is a $d$-dimensional array of numbers denoted by script notation $\mathcal{X} \in \mathbb{R}^{I_1 \times \cdots \times I_d}$ with entries given by
\begin{equation*}
x_{j_1,\dots,j_d}, \quad 1 \le j_1 \le I_1, \dots, 1 \le j_d \le I_d.
\end{equation*}

We use the matrix $X_{\left(n\right)} \in \mathbb{R}^{I_n \times \left(\Pi_{j \neq n}I_j\right)}$ to denote the $n$th mode unfolding of the tensor $\mathcal{X}$. Since this tensor has $d$ dimensions, there are altogether $d$-possibilities for unfolding. The $n$-mode product of a tensor $\mathcal{X} \in \mathbb{R}^{I_1 \times \cdots \times I_d}$ with a matrix $U \in \mathbb{R}^{k \times I_n}$ results in a tensor $\mathcal{Y} \in \mathbb{R}^{I_1 \times \cdots \times I_{n-1} \times k \times I_{n+1} \times \cdots \times I_d}$ such that
\begin{equation*}
y_{j_1,\dots,j_{n-1},j,j_{n+1},\dots,j_d} = \left(\mathcal{X} \times_n U\right)_{j_1,\dots,j_{n-1},j,j_{n+1},\dots,j_d} = \sum_{j_n=1}^{I_n} x_{j_1,\dots,j_d}u_{j,j_n}. 
\end{equation*}

\noindent Alternatively it can be expressed conveniently in terms of unfolded tensors: 
\begin{equation*}
\mathcal{Y} = \mathcal{X} \times_n U \Leftrightarrow Y_{\left(n\right)} = UX_{\left(n\right)},.
\end{equation*}

Decompositions of higher-order tensors have applications in signal processing \cite{de1998matrix,sidiropoulos2000parallel,de2004dimensionality}, numerical linear algebra \cite{de2000multilinear,kolda2001orthogonal,zhang2001rank}, computer vision \cite{vasilescu2002multilinear,shashua2005non,wang2003facial}, etc. Two particular tensor decompositions can be considered as higher-order extensions of the matrix SVD: CANDECOMP/PARAFAC (CP) \cite{carroll1970analysis,harshman1970foundations} decomposes a tensor as a sum of rank-one tensors, and the Tucker decomposition \cite{tucker1966some} is a higher-order form of principal component analysis. Given the definitions of mode products and unfolding of tensors, we can define the higher-order SVD (HOSVD) algorithm for producing a rank $\left(k_1,\dots,k_d\right)$ approximation to the tensor based on the Tucker decomposition format. The HOSVD algorithm \cite{de2000multilinear,kolda2009tensor} returns a core tensor $\mathcal{G} \in \mathbb{R}^{k_1 \times \cdots \times k_d}$ and a set of unitary matrices $U_j \in \mathbb{R}^{I_j \times k_j}$ for $j = 1,\dots,d$ such that 
\begin{equation*}
\mathcal{X} \approx \mathcal{G} \times_1 U_1 \cdots \times_d U_d,
\end{equation*}
where the right-hand side is called a Tucker decomposition. However, a straightforward generalization to higher-order ($d \ge 3$) tensors of the matrix Eckart--Young--Mirsky Theorem is not possible \cite{de2000best}; in fact, the best low-rank approximation is an ill-posed problem \cite{de2008tensor}. The HOSVD algorithm is outlined in Algorithm \ref{Alg: HOSVD}.

\begin{algorithm}
\caption{HOSVD}\label{Alg: HOSVD}
\begin{algorithmic}
\STATE $\textbf{Inputs:}$
\STATE Tensor $\mathcal{X} \in \mathbb{R}^{I_1 \times \cdots \times I_d}$ and desired rank $\left(k_1,\dots,k_d\right)$.
\STATE $\textbf{Outputs:}$
\STATE Tucker decomposition $\left[\mathcal{G};U_1,\cdots,U_d\right]$.
\STATE $\textbf{Algorithm:}$
\FOR{$j=1:d$}
\STATE Compute $k_j$ left singular vectors $U_j \in \mathbb{R}^{I_j \times k_j}$ of unfolding $X_{\left(j\right)}$. 
\ENDFOR
\STATE Compute core tensor $\mathcal{G} \in \mathbb{R}^{k_1 \times \cdots \times k_d}$ as 
\STATE 
\begin{equation*}
\mathcal{G} \stackrel{def}{=} \mathcal{X} \times_1 U_1^T \times_2 \cdots \times_d U_d^T.
\end{equation*}
\end{algorithmic}
\end{algorithm}

Since HOSVD can be prohibitive for large-scale problems, there has been a lot of literature to improve the efficiency of HOSVD computations without a noticeable deterioration in quality. One strategy for truncating the HOSVD, sequentially truncated HOSVD (ST-HOSVD) algorithm, was proposed in \cite{andersson1998improving} and studied by \cite{vannieuwenhoven2012new}. As was shown by \cite{vannieuwenhoven2012new}, ST-HOSVD retains several of the favorable properties of HOSVD while significantly reducing the computational cost and memory consumption. The ST-HOSVD is outlined in Algorithm \ref{Alg: ST-HOSVD}.

\begin{algorithm}
\caption{ST-HOSVD}\label{Alg: ST-HOSVD}
\begin{algorithmic}
\STATE $\textbf{Inputs:}$
\STATE Tensor $\mathcal{X} \in \mathbb{R}^{I_1 \times \cdots \times I_d}$, desired rank $\left(k_1,\dots,k_d\right)$, and processing order $p=\left(p_1,\cdots,p_d\right)$.
\STATE $\textbf{Outputs:}$
\STATE Tucker decomposition $\left[\mathcal{G};U_1,\cdots,U_d\right]$.
\STATE $\textbf{Algorithm:}$
\STATE Define tensor $\mathcal{G} \leftarrow \mathcal{X}$.
\FOR{$j = 1:d$}
\STATE $r = p_j$.
\STATE Compute exact or approximate rank $k_r$ SVD of the tensor unfolding $G_{\left(r\right)} \approx \widehat{U}_r \widehat{\Sigma}_r \widehat{V}_r^T$.
\STATE $U_{r} \leftarrow \widehat{U}_r$.
\STATE Update $G_{\left(r\right)} \leftarrow \widehat{\Sigma}_r \widehat{V}_r^T$, i.e., applying $\widehat{U}_r^T$ to $\mathcal{G}$.
\ENDFOR
\end{algorithmic}
\end{algorithm}

Unlike HOSVD, where the number of entries in tensor unfolding $X_{\left(j\right)}$ remains the same after each loop, the number of entries in $G_{\left(p_j\right)}$ decreases as $j$ increases in ST-HOSVD. In ST-HOSVD, one key step is to compute exact or approximate rank-$k_r$ SVD of the tensor unfolding. Well known efficient ways to compute an exact low-rank SVD include Krylov subspace methods \cite{lehoucq1998arpack}. There are also efficient randomized algorithms to find an approximate low-rank SVD \cite{halko2011finding}. In Matlab tensorlab toolbox \cite{Tensorlab2016}, the most efficient method, MLSVD\_RSI, is essentially ST-HOSVD with randomized subspace iteration to find approximate SVD of tensor unfolding.

\subsection{Nuclear Norm Minimization}
Matrix rank minimization problem appears ubiquitously in many fields such as Euclidean embedding \cite{fazel2003log,linial1995geometry}, control \cite{fazel2002matrix,mesbahi1997rank,recht2010guaranteed}, collaborative filtering \cite{candes2009exact,rennie2005fast,srebro2005maximum}, system identification \cite{liu2013nuclear,liu2009interior}, etc. Matrix rank minimization problem has the following form:
\begin{equation*}
\min_{X \in \mathcal{C}} \;\; \mbox{rank}\left(X\right) 
\end{equation*}
where $X \in \mathbb{R}^{m \times n}$ is the decision variable, and $\mathcal{C}$ is a convex set. In general, this problem is NP-hard due to the combinatorial nature of the function $\mbox{rank}\left(\cdot\right)$. To obtain a convex and more computationally tractable problem, $\mbox{rank}\left(X\right)$ is replaced by its convex envelope. In \cite{fazel2002matrix}, authors proved that the nuclear norm $\|X\|_*$ is the convex envelope of $\mbox{rank}\left(X\right)$ on the set $\{X\in \mathbb{R}^{m\times n}:~\|X\|_2 \le 1\}$. The nuclear norm of a matrix $X \in \mathbb{R}^{m \times n}$ is defined as 
\begin{equation*}
\|X\|_* \stackrel{def}{=} \sum_{i = 1}^q \sigma_i \left(X\right),
\end{equation*}
where $q = \mbox{rank}\left(X\right)$ and $\sigma_i \left(X\right)$'s are the singular values of $X$.

In many applications, the regularized form of nuclear norm minimization problem is considered:
\begin{equation*}
\min_{X \in \mathbb{R}^{m \times n}} f\left(X\right) + \tau \|X\|_*
\end{equation*}
where $\tau > 0$ is a regularization parameter. The choice of function $f\left(\cdot\right)$ is situational: $f\left(X\right) = \|M-X\|_1$ in robust principal component analysis (robust PCA) \cite{candes2011robust}, $f\left(X\right) = \|\pi_\Omega\left(M\right) - \pi_\Omega\left(X\right) ||_F^2$ in matrix completion \cite{cai2010singular}, $f\left(X\right) = \frac{1}{2}\|AX-B\|_F^2$ in multi-class learning and multivariate regression \cite{ma2011fixed}, where $M$ is the measured data, $\|\cdot\|_1$ denotes the $l_1$ norm, and $\pi_\Omega\left(\cdot\right)$ is an orthogonal projection onto the span of matrices vanishing outside of $\Omega$ so that $\left[\pi_\Omega\left(X\right)\right]_{i,j} = X_{ij}$ if $\left(i,j\right) \in \Omega$ and zero otherwise.

Many researchers have devoted themselves to solving the above nuclear norm minimization problem and plenty of algorithms have been proposed, including, singular value thresholding (SVT) \cite{cai2010singular}, fixed point continuous (FPC) \cite{ma2011fixed}, accelerated proximal gradient (APG) \cite{toh2010accelerated}, augmented Lagrange multiplier (ALM) \cite{lin2010augmented}. The most expensive part of these algorithms is in the computation of the truncated SVD. Inexact augmented Lagrange multiplier (IALM) \cite{lin2010augmented} has been proved to be one of the most accurate and efficient among them. We now describe IALM for robust PCA and matrix completion problems.

Robust PCA problem can be formalized as a minimization problem of sum of nuclear norm and scaled matrix $l_1$-norm (sum of matrix entries in absolute value):
\begin{equation}\label{Eqn: nuclear norm minimization of RPCA problem}
\min \|X\|_* + \lambda \|E\|_1, \qquad \mbox{subject to} \qquad M = X + E,
\end{equation}
where $M$ is measured matrix, $X$ has low-rank, $E$ is a error matrix and sufficiently sparse, and $\lambda$ is a positive weighting parameter.
Algorithm \ref{Alg: ialm_rpca} describes the details of IALM method to solve robust PCA \cite{lin2010augmented} problem, where $\|\cdot\|_M$ denotes the maximum absolute value of the matrix entries, and $\mathcal{S}_\omega\left(x\right) = \mbox{sgn}\left(x\right) \cdot \max\left(|x| - \omega,0\right)$ is the soft shrinkage operator \cite{hale2008fixed} where $x \in \mathbb{R}^n$ and $\omega > 0$.

\begin{algorithm}
\caption{Robust PCA Using IALM}\label{Alg: ialm_rpca}
\begin{algorithmic}
\STATE $\textbf{Inputs:}$
\STATE Measured matrix $M \in \mathbb{R}^{m \times n}$, positive number $\lambda, ~\mu_0,~\overline{\mu}$, tolerance $tol$, $\rho > 1$.
\STATE $\textbf{Outputs:}$
\STATE Matrix pair $\left(X_k,E_k\right)$.
\STATE $\textbf{Algorithm:}$
\STATE $k = 0$; $J\left(M\right) = \max\left(\|M\|_2,\|M\|_M\right); ~Y_0 = M / J\left(M\right)$; $E_0 = 0$;
\WHILE{not converged}
\STATE $\mathbf{\left(U,\Sigma,V\right) = \mbox{\bf svd}\left(M-E_k+\mu_k^{-1}Y_k\right)}$;
\STATE $X_{k+1} = U \mathcal{S}_{\mu_k^{-1}}\left(\Sigma\right) V^T$;
\STATE $E_{k+1} = \mathcal{S}_{\lambda \mu_k^{-1}} \left(M-X_{k+1}+\mu_k^{-1}Y_k\right)$;
\STATE $Y_{k+1} = Y_k + \mu_k\left(M - X_{k+1} - E_{k+1}\right)$;
\STATE Update $\mu_{k+1} = \min\left(\rho \mu_k,\overline{\mu}\right)$;
\STATE $k = k+1$;
\IF{$\|M - X_{k}-E_{k}\|_F/\|M\|_F < tol$} 
\STATE Break;
\ENDIF
\ENDWHILE
\end{algorithmic}
\end{algorithm}

Matrix completion problem \cite{candes2009exact,lin2010augmented} can be written in the form: 
\begin{equation}\label{Eqn: nuclear norm minimization of matrix completion problem}
\min_{X \in \mathbb{R}^{m \times n}} \|X\|_* \qquad \mbox{subject to} \qquad X+E = M,\quad \pi_\Omega\left(E\right) = 0,
\end{equation}
where $\pi_\Omega: ~ \mathbb{R}^{m \times n} \rightarrow \mathbb{R}^{m \times n}$ is an orthogonal projection that keeps the entries in $\Omega$ unchanged and sets those outside $\Omega$ zeros. In \cite{lin2010augmented}, authors applied IALM method on the matrix completion problem. We describe this method in Algorithm \ref{Alg: ialm_mc}, where $\overline{\Omega}$ is the complement of $\Omega$.

\begin{algorithm}
\caption{Matrix Completion Using IALM}\label{Alg: ialm_mc}
\begin{algorithmic}
\STATE $\textbf{Inputs:}$
\STATE Sampled set $\Omega$, sampled entries $\pi_\Omega\left(M\right)$, positive number $\lambda, ~\mu_0,~\overline{\mu}$, tolerance $tol$, $\rho > 1$.
\STATE $\textbf{Outputs:}$
\STATE Matrix pair $\left(X_k,E_k\right)$.
\STATE $\textbf{Algorithm:}$
\STATE $k = 0$; $Y_0 = 0$; $E_0 = 0$;
\WHILE{not converged}
\STATE $\mathbf{\left(U,\Sigma,V\right) = \mbox{\bf svd}\left(M-E_k+\mu_k^{-1}Y_k\right)}$;
\STATE $X_{k+1} = U \mathcal{S}_{\mu_k^{-1}}\left(\Sigma\right) V^T$;
\STATE $E_{k+1} = \pi_{\overline{\Omega}}\left(M-X_{k+1}+\mu_k^{-1}Y_k\right)$;
\STATE $Y_{k+1} = Y_k + \mu_k\left(M - X_{k+1} - E_{k+1}\right)$;
\STATE Update $\mu_{k+1} = \min\left(\rho \mu_k,\overline{\mu}\right)$;
\STATE $k = k+1$;
\IF{$\|M - X_{k}-E_{k}\|_F/\|M\|_F < tol$} 
\STATE Break;
\ENDIF
\ENDWHILE
\end{algorithmic}
\end{algorithm}

\section{Flip-Flop SRQR Factorization}\label{Sec:FFQR}
\subsection{Flip-Flop SRQR Factorization}
In this section, we introduce our Flip-Flop SRQR factorization, a slightly different from TUXV (Algorithm \ref{Alg: TUXV}), to compute SVD approximation based on QLP factorization. Given integer $l\ge k$, we run SRQR (the version without computing the trailing matrix) to $l$ steps on $A$, 
\begin{equation}\label{Eqn: partial QRCP on A}
A \Pi = Q R = Q \left(
\begin{array}{cc}
R_{11} & R_{12} \\
& R_{22}
\end{array}
\right),
\end{equation}
where $R_{11} \in \mathbb{R}^{l \times l}$ is upper triangular; $R_{12} \in \mathbb{R}^{l \times \left(n-l\right)}$; and $R_{22} \in \mathbb{R}^{\left(m-l\right) \times \left(n-l\right)}$. Then we run partial QRNP to $l$ steps on $R^T$,
\begin{equation} \label{Eqn: partial QRNP on R^T}
R^T = \left(
\begin{array}{cc}
R_{11}^T & \\
R_{12}^T & R_{22}^T
\end{array}
\right) = \widehat{Q} \left(
\begin{array}{cc}
\widehat{R}_{11} & \widehat{R}_{12} \\
& \widehat{R}_{22}
\end{array}
\right) \approx \widehat{Q}_1 \left(
\begin{array}{cc}
\widehat{R}_{11} & \widehat{R}_{12}
\end{array}
\right),
\end{equation}
where $\widehat{Q} = \left(
\begin{array}{cc}
\widehat{Q}_1 & \widehat{Q}_2
\end{array}
\right)$ with $\widehat{Q}_1 \in \mathbb{R}^{n \times l}$. Therefore, combing the fact that $A \Pi \widehat{Q}_1 = Q \begin{pmatrix} \widehat{R}_{11} & \widehat{R}_{12} \end{pmatrix}^T$, we can approximate matrix $A$ by 
\begin{equation}\label{Eqn: approximate matrix A by A (Pi widehat(Q1)) (Pi widehat(Q1))^T}
A = QR\Pi^T = Q \left(R^T\right)^T \Pi^T \approx Q \left(
\begin{array}{c}
\widehat{R}_{11}^T \\
\widehat{R}_{12}^T
\end{array} 
\right) \widehat{Q}_1^T \Pi^T = A \left(\Pi \widehat{Q}_1\right) \left(\Pi \widehat{Q}_1\right)^T.
\end{equation}

We denote the rank-$k$ truncated SVD of $A\Pi \widehat{Q}_1$ by $\widetilde{U}_k \Sigma_k \widetilde{V}_k^T$. 
Let $U_k = \widetilde{U}_k,~V_k = \Pi \widehat{Q}_1 \widetilde{V}_k$, then using \eqref{Eqn: approximate matrix A by A (Pi widehat(Q1)) (Pi widehat(Q1))^T}, a rank-$k$ approximate SVD of $A$ is obtained:
\begin{equation} \label{Eqn: rank-k approximate svd of A by A (Pi widehat(Q1)) (Pi widehat(Q1))^T}
A \approx U_k \Sigma_k V_k^T,
\end{equation}
where $U_k \in \mathbb{R}^{m \times k},~V_k \in \mathbb{R}^{n \times k}$ are column orthonormal; and $\Sigma_k = \mbox{Diag}\left(\sigma_1,\cdots,\sigma_k\right)$ with $\sigma_i$'s are the leading $k$ singular values of $A \Pi \widehat{Q}_1$. The Flip-Flop SRQR factorization is outlined in Algorithm \ref{Alg:flip-Flop SRQR factorization}.

\begin{algorithm}
\caption{Flip-Flop Spectrum-Revealing QR Factorization}
\label{Alg:flip-Flop SRQR factorization}
\begin{algorithmic}
\STATE $\textbf{Inputs:}$
\STATE Matrix $A \in \mathbb{R}^{m \times n}$. Target rank $k$. Block size $b$. Oversampling size $p\ge 0$. 
\STATE Integer $l \ge k$. Tolerance $g>1$ for $g_2$.
\STATE $\textbf{Outputs:}$
\STATE $U\in \mathbb{R}^{m \times k}$ contains the approximate top $k$ left singular vectors of $A$.
\STATE $\Sigma \in \mathbb{R}^{k \times k}$ contains the approximate top $k$ singular values of $A$.
\STATE $V\in \mathbb{R}^{n \times k}$ contains the approximate top $k$ right singular vectors of $A$.
\STATE $\textbf{Algorithm:}$
\STATE Run SRQR on $A$ to $l$ steps to obtain $\left(R_{11},R_{12}\right)$.
\STATE Run QRNP on $\left(R_{11},R_{12}\right)^T$ to obtain $\widehat{Q}_1$, represented by a sequence of Householder vectors.
\STATE $tmp = A \Pi \widehat{Q}_1$.
\STATE $[U_{tmp},\Sigma_{tmp},V_{tmp}] = svd\left(tmp\right)$.
\STATE $U = U_{tmp}\left(:,1:,k\right), \Sigma = \Sigma_{tmp}\left(1:k,1:k\right), V = \Pi \widehat{Q}_1 V_{tmp}\left(:,1:k\right)$.
\end{algorithmic}
\end{algorithm}

\subsection{Complexity Analysis}
In this section, we do complexity analysis of Flip-Flop SRQR. Since approximate SVD only makes sense when target rank $k$ is small, we assume $k \le l  \ll \min\left(m,n\right)$. The complexity analysis of Flip-Flop SRQR is as follows:
\begin{enumerate}
    \item The cost of doing SRQR with TRQRCP on $A$ is $2mnl+2(b+p)mn+(m+n)l^2$.
    \item The cost of QR factorization on $\left(R_{11}, R_{12}\right)^T$ and forming $\widehat{Q}_1$ is $2n l^2-\frac{2}{3} l^3$. 
    \item The cost of computing $tmp = A \Pi \widehat{Q}_1$ is $2mn l$.
    \item The cost of computing $[U,\sim,\sim] = svd\left(tmp\right)$ is $O(m l^2)$.
    \item The cost of forming $V_k$ is $2n l k$.
\end{enumerate}

Since $k \le l \ll \min\left(m,n\right)$, the complexity of Flip-Flop SRQR is $4mnl+2(b+p)mn$ by omitting the lower-order terms.

On the other hand, the complexity of approximate SVD with randomized subspace iteration (RSISVD) \cite{gu2015subspace,halko2011finding} is $\left(4+4q\right)mn\left(k+p\right)$, where $p$ is the oversampling size and $q$ is the number of subspace iterations (see detailed analysis in the appendix). In practice $p$ is chosen to be a small integer like $5$ in RSISVD and $l$ is usually chosen to be a little bit larger than $k$, like $l = k + 5$. Therefore, we can see that Flip-Flop SRQR is more efficient than RSISVD for any $q > 0$.

\subsection{Quality Analysis of Flip-Flop SRQR}
This section is devoted to the quality analysis of Flip-Flop SRQR. We start with Lemma \ref{Lem: singular inequality between X and X1 and X2}.

\begin{lemma} \label{Lem: singular inequality between X and X1 and X2}
Given any matrix $X = (X_1, X_2)$ with $X_i \in \mathbb{R}^{m \times n_i}~\left(i = 1,2\right)$ and $n_1 + n_2 = n$,
\begin{equation*}
\sigma_j \left(X\right)^2 \le \sigma_j\left(X_1\right)^2 + \|X_2\|_2^2 \quad \left(1 \le j \le \min(m,n)\right).
\end{equation*}
\end{lemma}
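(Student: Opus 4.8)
The plan is to exploit the variational (Courant--Fischer) characterization of singular values together with the fact that appending columns to $X_1$ can only perturb singular values in a controlled way. First I would recall the min-max formula
\begin{equation*}
\sigma_j(X)^2 = \lambda_j\left(X X^T\right) = \max_{\substack{S \subseteq \mathbb{R}^m \\ \dim S = j}} \; \min_{\substack{u \in S \\ \|u\|_2 = 1}} \; u^T X X^T u,
\end{equation*}
where $\lambda_j$ denotes the $j$th largest eigenvalue. The key algebraic observation is the splitting
\begin{equation*}
X X^T = (X_1, X_2)(X_1, X_2)^T = X_1 X_1^T + X_2 X_2^T,
\end{equation*}
so that for any unit vector $u$ we have $u^T X X^T u = u^T X_1 X_1^T u + u^T X_2 X_2^T u$.

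Next I would bound the second term uniformly: since $u^T X_2 X_2^T u = \|X_2^T u\|_2^2 \le \|X_2^T\|_2^2 = \|X_2\|_2^2$ for every unit $u$, it follows that
\begin{equation*}
u^T X X^T u \le u^T X_1 X_1^T u + \|X_2\|_2^2.
\end{equation*}
Substituting this into the min-max expression and pulling the constant $\|X_2\|_2^2$ outside both the inner minimum and the outer maximum, I obtain
\begin{equation*}
\sigma_j(X)^2 \le \max_{\dim S = j} \min_{\|u\|_2 = 1, \, u \in S} u^T X_1 X_1^T u + \|X_2\|_2^2 = \sigma_j(X_1)^2 + \|X_2\|_2^2,
\end{equation*}
which is exactly the claimed inequality. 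An entirely equivalent route, if one prefers to avoid writing out the min-max, is to invoke Weyl's monotonicity inequality for eigenvalues of symmetric matrices applied to the perturbation $X_1 X_1^T \mapsto X_1 X_1^T + X_2 X_2^T$: since $X_2 X_2^T \succeq 0$ has operator norm $\|X_2\|_2^2$, Weyl gives $\lambda_j(X_1 X_1^T + X_2 X_2^T) \le \lambda_j(X_1 X_1^T) + \|X_2 X_2^T\|_2$, which is the same bound once one identifies eigenvalues of the Gram matrices with squared singular values.

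I do not anticipate a genuine obstacle here; the statement is essentially a one-line consequence of eigenvalue monotonicity. The only point requiring mild care is bookkeeping on the index range and dimensions: $X_1 X_1^T$ and $X X^T$ are both $m \times m$, so $\sigma_j(X_1)$ is well defined for $1 \le j \le \min(m, n_1)$ and is taken to be zero for $j > \min(m,n_1)$, and one should confirm the inequality degenerates correctly (to $0 \le \|X_2\|_2^2$) in that tail range. With that convention the bound holds for all $1 \le j \le \min(m,n)$ as stated.
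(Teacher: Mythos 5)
Your proof is correct and follows essentially the same route as the paper: the paper's proof consists of the identity $XX^T = X_1X_1^T + X_2X_2^T$ followed by a citation of the Weyl-type eigenvalue perturbation bound (Theorem 3.3.16 of Horn and Johnson), which is exactly the inequality you establish via Courant--Fischer and also name explicitly in your alternative route. You have simply made self-contained, with correct bookkeeping on the index range, what the paper delegates to a reference.
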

\begin{proof}
Since $X X^T = X_1 X_1^T + X_2 X_2^T$, we obtain the above result using \cite[Theorem 3.3.16]{horn1991topics}.
\end{proof}

We are now ready to derive bounds on the singular values and approximation error of Flip-Flop SRQR. We need to emphasize that even if the target rank is $k$, we run Flip-Flop SRQR with an actual target rank $l$ which is a little bit larger than $k$. The difference between $k$ and $l$ can create a gap between singular values of $A$ so that we can obtain a reliable low-rank approximation. 

\begin{theorem}
Given matrix $A \in \mathbb{R}^{m \times n}$, target rank $k$, oversampling size $p$, and an actual target rank $l \ge k$, $U_k,~\Sigma_k,~V_k$ computed by \eqref{Eqn: rank-k approximate svd of A by A (Pi widehat(Q1)) (Pi widehat(Q1))^T} of Flip-Flop SRQR satisfies
\begin{equation}\label{Eqn: the auxiliary lower bound of sigma_j(Sigma_k)}
\sigma_j(\Sigma_k) \ge \frac{\sigma_j(A)}{\sqrt[4]{1 + \frac{2\left\|R_{22}\right\|_2^4}{\sigma_j^4(\Sigma_k)}}} \quad \left(1 \le j \le k\right),
\end{equation}
and 
\begin{equation}\label{Eqn: the auxiliary upper bound of approximate SVD}
\left\|A - U_k \Sigma_k V_k^T \right\|_2
\le
\sigma_{k+1}\left(A\right) \sqrt[4]{1 + 2 \left(\frac{\left\|R_{22}\right\|_2}{\sigma_{k+1}\left(A\right)}\right)^4},
\end{equation}
where $R_{22} \in \mathbb{R}^{(m-l) \times (n-l)}$ is the trailing matrix in \eqref{Eqn: partial QRCP on A}. Using the properties of SRQR, we can further have 
\begin{equation}\label{Eqn: the lower bound of sigma_j(Sigma_k)}
\sigma_j\left(\Sigma_k\right) \geq \frac{\sigma_j\left(A\right)}{\sqrt[4]{1+\min \left( 2\widehat{\tau}^4, \tau^4\left(2+4\widehat{\tau}^4\right) \left(\frac{\sigma_{l+1}\left(A\right)}{\sigma_j\left(A\right)}\right)^4\right)}}\quad \left(1 \le j \le k\right),
\end{equation}
and 
\begin{equation}\label{Eqn: the upper bound of approximate SVD}
\left\|A - U_k \Sigma_k V_k^T \right\|_2
\le
\sigma_{k+1}\left(A\right) \sqrt[4]{1 + 2 \tau^4 \left(\frac{\sigma_{l+1}\left(A\right)}{\sigma_{k+1}\left(A\right)}\right)^4},
\end{equation}
where $\tau$ and $\widehat{\tau}$ defined in \eqref{Eqn: definition of tau and widehat(tau)} have matrix dimensional dependent upper bounds:
\begin{equation*}
\tau \le g_1 g_2 \sqrt{\left(l+1\right)\left(n-l\right)}, \quad \mbox{and} \quad \widehat{\tau} \le g_1 g_2 \sqrt{l\left(n-l\right)},
\end{equation*}
where $g_1 \le \sqrt{\frac{1+\varepsilon}{1-\varepsilon}}$ and $g_2 \le g$. $\varepsilon > 0$ and $g > 1$ are user defined parameters.
\end{theorem}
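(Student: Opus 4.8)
The plan is to work throughout with the factor $\widehat R$ produced by the flip step, since every quantity in the statement is invariant under the orthogonal transformations $Q$ and $\Pi\widehat Q$. First I would record the reductions implied by \eqref{Eqn: partial QRCP on A}--\eqref{Eqn: partial QRNP on R^T}: from $A\Pi = QR = Q\widehat R^T\widehat Q^T$ we get $A = Q\,\widehat R^T\,(\Pi\widehat Q)^T$, so $\sigma_j(A) = \sigma_j(\widehat R)$; and $A\Pi\widehat Q_1 = Q\left(\begin{smallmatrix}\widehat R_{11}^T\\ \widehat R_{12}^T\end{smallmatrix}\right)$, so $\sigma_j(\Sigma_k) = \sigma_j(\widehat R_{11},\widehat R_{12})$ for $1\le j\le k$. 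The single fact that powers both bounds is an exact trailing-norm identity: the last $m-l$ columns of $\widehat R$ are $C := \left(\begin{smallmatrix}\widehat R_{12}\\ \widehat R_{22}\end{smallmatrix}\right) = \widehat Q^T\left(\begin{smallmatrix}0\\ R_{22}^T\end{smallmatrix}\right)$, whence $\|C\|_2 = \|R_{22}\|_2$ and therefore $\|\widehat R_{12}\|_2,\ \|\widehat R_{22}\|_2 \le \|R_{22}\|_2$.

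For the auxiliary lower bound \eqref{Eqn: the auxiliary lower bound of sigma_j(Sigma_k)} I would apply Lemma \ref{Lem: singular inequality between X and X1 and X2} not to $\widehat R$ itself (which only yields a second-power estimate) but to the Gram matrix $Z := \widehat R\widehat R^T$, whose singular values are $\sigma_j(A)^2$. In its $l\,|\,(n-l)$ block form, $G_{11} = \widehat R_{11}\widehat R_{11}^T+\widehat R_{12}\widehat R_{12}^T$ (so $\sigma_j(G_{11}) = \sigma_j(\Sigma_k)^2$), $G_{12} = \widehat R_{12}\widehat R_{22}^T$, and the trailing column block is $Z_2 = C\widehat R_{22}^T$, giving $\|Z_2\|_2\le\|C\|_2\|\widehat R_{22}\|_2\le\|R_{22}\|_2^2$. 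Lemma \ref{Lem: singular inequality between X and X1 and X2} then gives $\sigma_j(A)^4 = \sigma_j(Z)^2\le \sigma_j(Z_1)^2 + \|Z_2\|_2^2$, and a Weyl estimate yields $\sigma_j(Z_1)^2 = \lambda_j\!\left(G_{11}^2 + G_{12}G_{12}^T\right)\le \sigma_j(\Sigma_k)^4 + \|G_{12}\|_2^2 \le \sigma_j(\Sigma_k)^4 + \|R_{22}\|_2^4$. Adding the two contributions gives $\sigma_j(A)^4\le\sigma_j(\Sigma_k)^4 + 2\|R_{22}\|_2^4$, which is \eqref{Eqn: the auxiliary lower bound of sigma_j(Sigma_k)} after rearrangement. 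The essential point is that the off-diagonal term $\|G_{12}\|_2^2$ and the trailing term $\|Z_2\|_2^2$ each contribute only $\|R_{22}\|_2^4$, precisely because of the exact identity $\|C\|_2 = \|R_{22}\|_2$.

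For the auxiliary upper bound \eqref{Eqn: the auxiliary upper bound of approximate SVD} I would reduce the residual the same way: with $W = \Pi\widehat Q_1$ and $L = \widehat R^T = (L_1,L_2)$, where $L_2 = \left(\begin{smallmatrix}0\\ \widehat R_{22}^T\end{smallmatrix}\right)$ and $(L_1)_k$ is the rank-$k$ truncation used in \eqref{Eqn: rank-k approximate svd of A by A (Pi widehat(Q1)) (Pi widehat(Q1))^T}, orthogonal invariance gives $\|A - U_k\Sigma_k V_k^T\|_2 = \sigma_1\!\left(L_1-(L_1)_k,\,L_2\right) = \lambda_1\!\left(LL^T - (L_1)_k(L_1)_k^T\right)^{1/2}$, and the target is the mirror inequality $\|A - U_k\Sigma_k V_k^T\|_2^4\le\sigma_{k+1}(A)^4 + 2\|R_{22}\|_2^4$. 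Here the naive estimate $\|A - U_k\Sigma_k V_k^T\|_2^2\le\sigma_{k+1}(L_1)^2+\|\widehat R_{22}\|_2^2$ cannot simply be squared, since that leaves a cross term of order $\sigma_{k+1}(A)^2\|R_{22}\|_2^2$. Instead I would capture the compensating second-order effect via Courant--Fischer: taking $u^{*}$ to be the top eigenvector of $LL^T - (L_1)_k(L_1)_k^T$ and testing $\sigma_{k+1}(A)^2 = \lambda_{k+1}(LL^T)$ against the $(k+1)$-dimensional subspace $\mathrm{range}\big((L_1)_k\big)\oplus\mathrm{span}(u^{*})$, which feels the contribution of $L_2$ exactly in the direction where it raises $\sigma_{k+1}(A)$; combined with $\|L_2\|_2\le\|R_{22}\|_2$ and the Pythagorean splitting $\|L^T u\|^2 = \|(L_1)_k^T u\|^2 + \|(L_1-(L_1)_k)^T u\|^2 + \|L_2^T u\|^2$, this yields the factor-two fourth-power bound.

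Finally, the SRQR bounds \eqref{Eqn: the lower bound of sigma_j(Sigma_k)}--\eqref{Eqn: the upper bound of approximate SVD} follow from the auxiliary bounds by substituting the guarantees enforced by Algorithm \ref{Alg: SRQR}, which control $\|R_{22}\|_2$ relative to $\sigma_{l+1}(A)$ through $\tau$ and $\widehat\tau$ of \eqref{Eqn: definition of tau and widehat(tau)}: a direct estimate of $\|R_{22}\|_2$ produces the term $2\widehat\tau^4$, whereas routing through the singular-value gap $\sigma_{l+1}(A)/\sigma_j(A)$ created by choosing $l>k$ produces the alternative $\tau^4(2+4\widehat\tau^4)\big(\sigma_{l+1}(A)/\sigma_j(A)\big)^4$ (hence the $\min$), and the dimensional bounds $\tau\le g_1g_2\sqrt{(l+1)(n-l)}$, $\widehat\tau\le g_1g_2\sqrt{l(n-l)}$ descend from the spectrum-revealing control $g_1\le\sqrt{(1+\varepsilon)/(1-\varepsilon)}$, $g_2\le g$. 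The main obstacle throughout is obtaining the sharp constant-two, fourth-power form rather than the elementary second-power estimate; everything hinges on applying Lemma \ref{Lem: singular inequality between X and X1 and X2} at the level of the Gram matrix together with the exact identity $\|C\|_2=\|R_{22}\|_2$, which confine the coupling to quadratic order in $\|R_{22}\|_2^2$.
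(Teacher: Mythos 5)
Your reduction identities and your proof of the first two bounds' singular-value half are correct, and the lower-bound argument is essentially the paper's own proof in light disguise: your single application of Lemma \ref{Lem: singular inequality between X and X1 and X2} to $Z=\widehat R\widehat R^T$ followed by a Weyl step on $\lambda_j\bigl(G_{11}^2+G_{12}G_{12}^T\bigr)$ is exactly the paper's double application of the same lemma, with the same block splitting and the same constant $2$; making the identity $\|C\|_2=\|R_{22}\|_2$ explicit is a genuine expository improvement, since the paper uses it silently in the step ``$=\sigma_j^2(\cdot)+2\|R_{22}\|_2^4$''. The closing part is also structurally right, except for one misattribution: the dimensional bounds $\tau\le g_1g_2\sqrt{(l+1)(n-l)}$ and $\widehat\tau\le g_1g_2\sqrt{l(n-l)}$ do not descend from $g_1\le\sqrt{(1+\varepsilon)/(1-\varepsilon)}$ and $g_2\le g$ (those only control the prefactor $g_1g_2$); they come from the norm equivalence $\frac{1}{\sqrt{n}}\|X\|_{1,2}\le\|X\|_2\le\sqrt{n}\|X\|_{1,2}$ combined with submatrix interlacing, e.g.\ $\sigma_{l+1}(\widetilde R)\le\sigma_{l+1}(A)$ and $\sigma_l(R_{11})\le\sigma_l\bigl(R_{11},R_{12}\bigr)=\sigma_l(\widehat R_{11})\le\sigma_k(\Sigma_k)$.

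The genuine gap is your residual bound \eqref{Eqn: the auxiliary upper bound of approximate SVD}. Your Courant--Fischer plan invokes only structure-blind ingredients --- the truncation $(L_1)_k$, the generic Pythagorean splitting $\|L^Tu\|^2=\|(L_1)_k^Tu\|^2+\|(L_1-(L_1)_k)^Tu\|^2+\|L_2^Tu\|^2$, and $\|L_2\|_2\le\|R_{22}\|_2$ --- and these cannot suffice, because for arbitrary pairs $(L_1,L_2)$ satisfying all of them the fourth-power conclusion is simply false. Take $k=1$, $L_1=I_2$, $(L_1)_1=e_1e_1^T$, $L_2=\frac{t}{\sqrt2}(1,1)^T$: then $\mu:=\lambda_1\bigl(LL^T-(L_1)_1(L_1)_1^T\bigr)=1+\frac{t^2}{2}+O(t^4)$ while $\lambda_2(LL^T)=1$, so $\mu^2-\lambda_2(LL^T)^2\approx t^2$, which overwhelms $2\|L_2\|_2^4=2t^4$; moreover your test subspace $\mathrm{range}\bigl((L_1)_1\bigr)\oplus\mathrm{span}(u^{*})$ here spans $\mathbb{R}^2$ and Courant--Fischer returns only $\lambda_2(LL^T)\ge\lambda_{\min}(LL^T)=1$, i.e.\ precisely the naive second-order estimate you set out to beat. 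What makes the fourth-power bound true in the Flip-Flop setting is the block-triangular structure that this toy violates: $L_2=\left(\begin{smallmatrix}0\\ \widehat R_{22}^T\end{smallmatrix}\right)$, so the residual is, up to transposition, $\left(\begin{smallmatrix}\delta\overline R_{11} & \delta\overline R_{12}\\ & \widehat R_{22}\end{smallmatrix}\right)$ with $(\delta\overline R_{11},\delta\overline R_{12})$ the rank-$k$ SVD remainder of $(\widehat R_{11},\widehat R_{12})$. The paper exploits this shape directly: it applies the very same fourth-power Gram estimate you proved in part one, now with $j=1$, to get $\|N\|_2^4\le\bigl\|(\delta\overline R_{11},\delta\overline R_{12})\bigr\|_2^4+2\left\|\left(\begin{smallmatrix}\delta\overline R_{12}\\ \widehat R_{22}\end{smallmatrix}\right)\right\|_2^4$, then uses interlacing, $\bigl\|(\delta\overline R_{11},\delta\overline R_{12})\bigr\|_2=\sigma_{k+1}\bigl(\widehat R_{11},\widehat R_{12}\bigr)\le\sigma_{k+1}(A)$, together with the SVD orthogonality relation $\widehat R_{12}^T\widehat R_{12}=\overline R_{12}^T\overline R_{12}+\delta\overline R_{12}^T\,\delta\overline R_{12}$ --- the one structural fact your sketch never uses --- to conclude $\left\|\left(\begin{smallmatrix}\delta\overline R_{12}\\ \widehat R_{22}\end{smallmatrix}\right)\right\|_2\le\left\|\left(\begin{smallmatrix}\widehat R_{12}\\ \widehat R_{22}\end{smallmatrix}\right)\right\|_2=\|R_{22}\|_2$. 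Your own identity $\|C\|_2=\|R_{22}\|_2$ is exactly the last link in that chain; replace the min--max step by this structured Gram argument and the proof closes.
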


\begin{proof}
In terms of the singular value bounds, observe that 
\begin{equation*}
\left(
\begin{array}{cc}
\widehat{R}_{11} & \widehat{R}_{12} \\
& \widehat{R}_{22}
\end{array}
\right)
\left(
\begin{array}{cc}
\widehat{R}_{11} & \widehat{R}_{12} \\
& \widehat{R}_{22}
\end{array}
\right)^T 
= 
\left(
\begin{array}{cc}
\widehat{R}_{11}\widehat{R}_{11}^T + \widehat{R}_{12}\widehat{R}_{12}^T & \widehat{R}_{12}\widehat{R}_{22}^T \\
\widehat{R}_{22}\widehat{R}_{12}^T & \widehat{R}_{22}\widehat{R}_{22}^T
\end{array}
\right),
\end{equation*}
we apply Lemma \ref{Lem: singular inequality between X and X1 and X2} twice for any $1 \le j \le k$,
\begin{align}\label{Eqn: singular value estimate}
& \sigma_j^2 \left(
\left(
\begin{array}{cc}
\widehat{R}_{11} & \widehat{R}_{12} \\
& \widehat{R}_{22}
\end{array}
\right)
\left(
\begin{array}{cc}
\widehat{R}_{11} & \widehat{R}_{12} \\
& \widehat{R}_{22}
\end{array}
\right)^T
\right) \notag \\
\le \, & \sigma_j^2\left(
\left(
\begin{array}{cc}
\widehat{R}_{11}\widehat{R}_{11}^T + \widehat{R}_{12}\widehat{R}_{12}^T & \widehat{R}_{12}\widehat{R}_{22}^T
\end{array}
\right)
\right)
+ 
\left\|\left(
\begin{array}{cc}
\widehat{R}_{22}\widehat{R}_{12}^T & \widehat{R}_{22}\widehat{R}_{22}^T
\end{array}
\right)
\right\|_2^2 \notag \\
\le \, & \sigma_j^2\left(
\widehat{R}_{11}\widehat{R}_{11}^T + \widehat{R}_{12}\widehat{R}_{12}^T
\right) + \left\|\widehat{R}_{12}\widehat{R}_{22}^T\right\|_2^2 + \left\|\left(
\begin{array}{cc}
\widehat{R}_{22}\widehat{R}_{12}^T & \widehat{R}_{22}\widehat{R}_{22}^T
\end{array}
\right)
\right\|_2^2 \notag \\
\le \, & \sigma_j^2\left(
\widehat{R}_{11}\widehat{R}_{11}^T + \widehat{R}_{12}\widehat{R}_{12}^T
\right) + 2 \left\|\left(
\begin{array}{c}
\widehat{R}_{12} \\
\widehat{R}_{22}
\end{array}
\right)\right\|_2^4 \notag \\
= \, & \sigma_j^2\left(
\widehat{R}_{11}\widehat{R}_{11}^T + \widehat{R}_{12}\widehat{R}_{12}^T
\right) + 2 \left\|R_{22}\right\|_2^4.
\end{align}

\noindent The relation \eqref{Eqn: singular value estimate} can be further rewritten as 
\begin{equation*}
\sigma_j^4\left(A\right) \le \sigma_j^4\left(\left(
\begin{array}{cc}
\widehat{R}_{11} & \widehat{R}_{12}
\end{array}
\right)\right) + 2 \left\|R_{22}\right\|_2^4 = \sigma_j^4\left(\Sigma_k\right) + 2 \left\|R_{22}\right\|_2^4 \quad \left(1 \le j \le k\right),
\end{equation*}
\noindent which is equivalent to 
\begin{equation*}
\sigma_j(\Sigma_k) \ge \frac{\sigma_j(A)}{\sqrt[4]{1 + \frac{2\left\|R_{22}\right\|_2^4}{\sigma_j^4(\Sigma_k)}}}.
\end{equation*}

For the residual matrix bound, we let 
\begin{equation*}
\left(
\begin{array}{cc}
\widehat{R}_{11} & \widehat{R}_{12}
\end{array}
\right)
\stackrel{def}{=}
\left(
\begin{array}{cc}
\overline{R}_{11} & \overline{R}_{12}
\end{array}
\right) + \left(
\begin{array}{cc}
\delta \overline{R}_{11} & \delta \overline{R}_{12}
\end{array}
\right),
\end{equation*}
where $\left(
\begin{array}{cc}
\overline{R}_{11} & \overline{R}_{12}
\end{array}
\right)$ is the rank-$k$ truncated SVD of $\left(
\begin{array}{cc}
\widehat{R}_{11} & \widehat{R}_{12}
\end{array}
\right)$. Notice that 
\begin{equation} \label{Eqn: equation of 2-norm error}
\left\|A-U_k \Sigma_k V_k^T \right\|_2 = \left\|A \Pi - Q \left(
\begin{array}{cc}
\overline{R}_{11} & \overline{R}_{12} \\
& 0
\end{array}
\right)^T \widehat{Q}^T\right\|_2,
\end{equation}
\noindent it follows from the orthogonality of singular vectors that 
\begin{equation*}
\left(
\begin{array}{cc}
\overline{R}_{11} & \overline{R}_{12}
\end{array}
\right)^T
\left(
\begin{array}{cc}
\delta \overline{R}_{11} & \delta \overline{R}_{12}
\end{array}
\right) = 0,
\end{equation*}
and therefore
\begin{eqnarray*}
&& \left(
\begin{array}{cc}
\widehat{R}_{11} & \widehat{R}_{12}
\end{array}
\right)^T
\left(
\begin{array}{cc}
\widehat{R}_{11} & \widehat{R}_{12}
\end{array}
\right) \\
&=& \left(
\begin{array}{cc}
\overline{R}_{11} & \overline{R}_{12}
\end{array}
\right)^T
\left(
\begin{array}{cc}
\overline{R}_{11} & \overline{R}_{12}
\end{array}
\right) + 
\left(
\begin{array}{cc}
\delta \overline{R}_{11} & \delta \overline{R}_{12}
\end{array}
\right)^T
\left(
\begin{array}{cc}
\delta \overline{R}_{11} & \delta \overline{R}_{12}
\end{array}
\right), 
\end{eqnarray*}
which implies 
\begin{equation}\label{Eqn: decompose widehat R12}
\widehat{R}_{12}^T\widehat{R}_{12} = \overline{R}_{12}^T \overline{R}_{12} + \left(\delta \overline{R}_{12}\right)^T \left(\delta \overline{R}_{12}\right).
\end{equation}

\noindent Similar to the deduction of \eqref{Eqn: singular value estimate}, from \eqref{Eqn: decompose widehat R12} we can derive 
\begin{eqnarray*}
\left\|\left(
\begin{array}{cc}
\delta \overline{R}_{11} & \delta \overline{R}_{12} \\
& \widehat{R}_{22}
\end{array}
\right)\right\|_2^4 &\le& \left\|\left(
\begin{array}{cc}
\delta \overline{R}_{11} & \delta \overline{R}_{12}
\end{array}
\right)\right\|_2^4 + 2\left\|\left(
\begin{array}{c}
\delta \overline{R}_{12} \\
\widehat{R}_{22}
\end{array}
\right)\right\|_2^4 \\
&\leq &
\sigma_{k+1}^4\left(A\right) + 2 \left\|\left(
\begin{array}{c}
\widehat{R}_{12} \\
\widehat{R}_{22}
\end{array}
\right)
\right\|_2^4 = \sigma_{k+1}^4\left(A\right) + 2 \left\|R_{22}\right\|_2^4.
\end{eqnarray*}

\noindent Combining with \eqref{Eqn: equation of 2-norm error}, it now follows that 
\begin{eqnarray*}
\left\|A - U_k \Sigma_k V_k^T \right\|_2 &=& 
\left\|A\Pi - Q \left(
\begin{array}{cc}
\overline{R}_{11} & \overline{R}_{12} \\
& 0
\end{array}
\right)^T \widehat{Q}^T\right\|_2
=\left\|\left(
\begin{array}{cc}
\delta \overline{R}_{11} & \delta \overline{R}_{12} \\
& \widehat{R}_{22}
\end{array}
\right)
\right\|_2 \\
&\le &
\sigma_{k+1}\left(A\right) \sqrt[4]{1 + 2 \left(\frac{\left\|R_{22}\right\|_2}{\sigma_{k+1}\left(A\right)}\right)^4}.
\end{eqnarray*}

To obtain an upper bound of $\left\|R_{22}\right\|_2$ in \eqref{Eqn: the auxiliary lower bound of sigma_j(Sigma_k)} and
\eqref{Eqn: the auxiliary upper bound of approximate SVD}, we follow the analysis of SRQR in \cite{xiao2017fast}. 

From the analysis of \cite[Section IV]{xiao2017fast}, Algorithm \ref{Alg: SRQR} ensures that $g_1 \le \sqrt{\frac{1+\varepsilon}{1-\varepsilon}}$ and $g_2 \le g$, where $g_1$ and $g_2$ are defined by \eqref{Eqn:Rtilde2}. Here $0<\varepsilon<1$ is a user defined parameter to adjust the choice of oversampling size $p$ used in the TRQRCP initialization part in SRQR. $g > 1$ is a user defined parameter in the extra swapping part in SRQR. Let 
\begin{equation}\label{Eqn: definition of tau and widehat(tau)}
\tau \stackrel{def}{=} g_1 g_2 \frac{\|R_{22}\|_2}{\|R_{22}\|_{1,2}} \frac{\left\|\widetilde{R}^{-T}\right\|_{1,2}^{-1}}{\sigma_{l+1}\left(A\right)} \qquad \mbox{and} \qquad \widehat{\tau} \stackrel{def}{=} g_1 g_2 \frac{\|R_{22}\|_2}{\|R_{22}\|_{1,2}} \frac{\left\|R_{11}^{-T}\right\|_{1,2}^{-1}}{\sigma_k\left(\Sigma_k\right)},
\end{equation}
where $\widetilde{R}$ is defined by \eqref{Eqn:Rtilde}. Since $\frac{1}{\sqrt{n}}\|X\|_{1,2} \le \|X\|_2 \le \sqrt{n}\|X\|_{1,2}$ and $\sigma_i\left(X_1\right) \le \sigma_i\left(X\right)~\left(1\leq i \le \min\left(s,t\right)\right)$ for any matrix $X \in \mathbb{R}^{m \times n}$ and submatrix $X_1 \in \mathbb{R}^{s \times t}$ of $X$,
\begin{equation*}
\tau = g_1 g_2 \frac{\|R_{22}\|_2}{\|R_{22}\|_{1,2}} \frac{\left\|\widetilde{R}^{-T}\right\|_{1,2}^{-1}}{\sigma_{l+1}\left(\widetilde{R}\right)}\frac{\sigma_{l+1}\left(\widetilde{R}\right)}{\sigma_{l+1}\left(A\right)}\le g_1 g_2 \sqrt{\left(l+1\right)\left(n-l\right)}.
\end{equation*}
Using the fact that $\sigma_l\left(\begin{pmatrix}
R_{11} & R_{12}
\end{pmatrix}\right) = \sigma_l\left(\widehat{R}_{11}\right)$ and $\sigma_k\left(\Sigma_k\right) = \sigma_k\left(\begin{pmatrix}
\widehat{R}_{11} & \widehat{R}_{12}
\end{pmatrix}\right)$ by \eqref{Eqn: partial QRNP on R^T} and \eqref{Eqn: rank-k approximate svd of A by A (Pi widehat(Q1)) (Pi widehat(Q1))^T},
\begin{equation*}
\widehat{\tau} = g_1 g_2 \frac{\|R_{22}\|_2}{\|R_{22}\|_{1,2}} \frac{\left\|R_{11}^{-T}\right\|_{1,2}^{-1}}{\sigma_l\left(R_{11}\right)}\frac{\sigma_l\left(R_{11}\right)}{\sigma_l\left(\begin{pmatrix}
R_{11} & R_{12}
\end{pmatrix}\right)}\frac{\sigma_l\left(\begin{pmatrix}
R_{11} & R_{12}
\end{pmatrix}\right)}{\sigma_k\left(\Sigma_k\right)} \le g_1 g_2 \sqrt{l\left(n-l\right)},
\end{equation*}

By definition of $\tau$,
\begin{equation}\label{Eqn: expression of 2-norm of R_22}
\left\|R_{22}\right\|_2 = \tau \, \sigma_{l+1}\left(A\right).
\end{equation}
Plugging this into \eqref{Eqn: the auxiliary upper bound of approximate SVD} yields \eqref{Eqn: the upper bound of approximate SVD}.

By definition of $\widehat{\tau}$, we observe that
\begin{equation}\label{Eqn: the upper bound of 2-norm of R22}
\|R_{22}\|_2 \le \widehat{\tau} \sigma_k\left(\Sigma_k\right).
\end{equation}
By \eqref{Eqn: the auxiliary lower bound of sigma_j(Sigma_k)} and \eqref{Eqn: the upper bound of 2-norm of R22},
\begin{equation}\label{Eqn: equation one for result}
\sigma_j \left(\Sigma_k \right) \ge \frac{\sigma_j\left(A\right)}{\sqrt[4]{1+2\left(\frac{\left\|R_{22}\right\|_2}{\sigma_j \left(\Sigma_k \right)}\right)^4}} \ge \frac{\sigma_j\left(A\right)}{\sqrt[4]{1+2\left(\frac{\left\|R_{22}\right\|_2}{\sigma_k \left(\Sigma_k \right)}\right)^4}} \ge \frac{\sigma_j\left(A\right)}{\sqrt[4]{1+2\widehat{\tau}^4}}, ~ \left(1\le j \le k\right).
\end{equation}

\noindent On the other hand, using \eqref{Eqn: the auxiliary lower bound of sigma_j(Sigma_k)},
\begin{align*}
\sigma_j^4\left(A\right) &\le \sigma_j^4\left(\Sigma_k\right)\left(1+2\frac{\sigma_j^4\left(A\right)}{\sigma_j^4\left(\Sigma_k\right)}\frac{\left\|R_{22}\right\|_2^4}{\sigma_j^4\left(A\right)}\right) \\
& \le \sigma_j^4\left(\Sigma_k\right)\left(1+2\frac{\left(\sigma_j^4\left(\Sigma_k\right)+2\left\|R_{22}\right\|_2^4\right)}{\sigma_j^4\left(\Sigma_k\right)}\frac{\left\|R_{22}\right\|_2^4}{\sigma_j^4\left(A\right)}\right) \\
& \le \sigma_j^4\left(\Sigma_k\right)\left(1+2\left(1+2\frac{\left\|R_{22}\right\|_2^4}{\sigma_k^4\left(\Sigma_k\right)}\right)\frac{\left\|R_{22}\right\|_2^4}{\sigma_j^4\left(A\right)}\right),
\end{align*}
that is,
\begin{equation*}
\sigma_j\left(\Sigma_k\right) \ge \frac{\sigma_j\left(A\right)}{\sqrt[4]{1+\left(2+4\frac{\left\|R_{22}\right\|_2^4}{\sigma_k^4\left(\Sigma_k\right)}\right)\frac{\left\|R_{22}\right\|_2^4}{\sigma_j^4\left(A\right)}}}
\end{equation*}
Plugging \eqref{Eqn: expression of 2-norm of R_22} and \eqref{Eqn: the upper bound of 2-norm of R22} into this above equation, 
\begin{equation}\label{Eqn: equation two for result}
\sigma_j\left(\Sigma_k\right) \ge \frac{\sigma_j\left(A\right)}{\sqrt[4]{1 + \tau^4 \left(2+4\widehat{\tau}^4\right) \left(\frac{\sigma_{l+1}\left(A\right)}{\sigma_j\left(A\right)}\right)^4}}, \qquad \left(1\le j \le k\right).
\end{equation}
Combing \eqref{Eqn: equation one for result} and \eqref{Eqn: equation two for result}, we arrive at \eqref{Eqn: the upper bound of approximate SVD}.
\end{proof}

We note that \eqref{Eqn: the auxiliary lower bound of sigma_j(Sigma_k)} and \eqref{Eqn: the auxiliary upper bound of approximate SVD} still hold true if we replace $k$ by $l$.

Equation \eqref{Eqn: the lower bound of sigma_j(Sigma_k)} shows that under definitions \eqref{Eqn: definition of tau and widehat(tau)} of $\tau$ and $\widehat{\tau}$, Flip-Flop SRQR can reveal at least a dimension dependent fraction of all the leading singular values of $A$ and indeed approximate them very accurately in case they decay relatively quickly. Moreover, \eqref{Eqn: the upper bound of approximate SVD} shows that Flip-Flop SRQR can compute a rank-$k$ approximation that is up to a factor of $\sqrt[4]{1+2\tau^4 \left(\frac{\sigma_{l+1}\left(A\right)}{\sigma_{k+1}\left(A\right)}\right)^4}$ from optimal. In situations where singular values of $A$ decay relatively quickly, our rank-$k$ approximation is about as accurate as the truncated SVD with a choice of $l$ such that
\begin{equation*}
\frac{\sigma_{l+1}\left(A\right)}{\sigma_{k+1}\left(A\right)} = o\left(1\right).
\end{equation*}

\section{Numerical Experiments}\label{Sec:Num}
In this section, we demonstrate the effectiveness and efficiency of Flip-Flop SRQR (FFSRQR) algorithm in several numerical experiments. Firstly, we compare FFSRQR with other approximate SVD algorithms on matrix approximation. Secondly, we compare FFSRQR with other methods on tensor approximation problem using tensorlab toolbox \cite{Tensorlab2016}. Thirdly, we compare FFSRQR with other methods on the robust PCA problem and matrix completion problem. All experiments are implemented in Matlab R2016b on a MacBook Pro with a 2.9 GHz i5 processor and 8 GB memory. The underlying routines used in FFSRQR are written in Fortran. For a fair comparison, we turn off multi-threading functions in Matlab.

\subsection{Approximate Truncated SVD}
In this section, we compare FFSRQR with other four approximate SVD algorithms on low-rank matrices approximation. All tested methods are listed in Table \ref{Tab: Lists of comparison methods and details for approximate SVD tests}. The test matrices are:
\begin{itemize}
    \item[{\bf Type $1$:}] $A \in \mathbb{R}^{m \times n}$ \cite{stewart1999qlp} is defined by
    $A = U \, D \, V^T + 0.1 \, \sigma_{j}(D) E$
    where $U\in \mathbb{R}^{m \times s}, V \in \mathbb{R}^{n \times s}$ are column-orthonormal matrices, and $D \in \mathbb{R}^{s \times s}$ is a diagonal matrix with $s$ geometrically decreasing diagonal entries from $1$ to $10^{-3}$. $E \in \mathbb{R}^{m \times n}$ is a random matrix where the entries are independently sampled from a normal distribution $\mathcal{N}\left(0,1\right)$. In our numerical experiment, we test on three different random matrices. The square matrix has a size of $15000 \times 15000$; the short-fat matrix has a size of $1000 \times 15000$; the tall-skinny matrix has a size of $15000 \times 1000$. 
    \item[{\bf Type $2$:}] $A \in \mathbb{R}^{4929 \times 4929}$ is a real data matrix from the University of Florida sparse matrix collection \cite{davis2011university}. Its corresponding file name is HB/GEMAT11.
\end{itemize}

For a given matrix $A \in \mathbb{R}^{m \times n}$, the relative SVD approximation error is measured by ${\left\|A - U_k \Sigma_k V_k^T \right\|_F}/{\left\|A\right\|_F}$
where $\Sigma_k$ contains approximate top $k$ singular values, and $U_k,~V_k$ are corresponding approximate top $k$ singular vectors. The parameters used in FFSRQR, RSISVD, and LTSVD are listed in Table \ref{Tab: parameters for approximate svd method}.

\begin{table}
\begin{center}
\begin{tabular}{ll}
\hline
Method & Description \\
\hline          
LANSVD  &  Approximate SVD using Lanczos bidiagonalization with partial  \\
& reorthogonalization \cite{larsen1998lanczos}. We use its Matlab implementation in \\
& PROPACK \cite{Propack1998}.   \\
FFSRQR  &  Flip-Flop Spectrum-revealing QR factorization. \\
& We write its implementation using Mex functions that wrapped \\ & BLAS and LAPACK routines \cite{anderson1999lapack}. \\
RSISVD  &  Approximate SVD with randomized subspace iteration \cite{halko2011finding,gu2015subspace}. \\
& We use its Matlab implementation in tensorlab toolbox \cite{Tensorlab2016}. \\
LTSVD  &  Linear Time SVD \cite{drineas2006fast}. \\
& We use its Matlab implementation by Ma et al. \cite{ma2011fixed}. \\
\hline 
\end{tabular}
\end{center}
\caption{Methods for approximate SVD.}\label{Tab: Lists of comparison methods and details for approximate SVD tests}
\end{table}

\begin{table}
\begin{center}
\begin{tabular}{ll}
\hline
Method & Parameter \\
\hline   
FFSRQR & oversampling size $p=5$, integer $l = k$ \\
RSISVD & oversampling size $p=5$, subspace iteration $q = 1$ \\
LTSVD & probabilities $p_i = 1/n$ \\
\hline
\end{tabular}
\end{center}
\caption{Parameters used in RSISVD, FFSRQR, and LTSVD.}\label{Tab: parameters for approximate svd method}
\end{table}

Figure \ref{Fig: run time of matrix} through Figure \ref{Fig: top 20 singular values of matrix} show run time, relative approximation error, and top $20$ singular values comparison respectively on four different matrices. While LTSVD is faster than the other methods in most cases, the approximation error of LTSVD is significantly larger than all the other methods. In terms of accuracy, FFSRQR is comparable to LANSVD and RSISVD. In terms of speed, FFSRQR is faster than LANSVD. When target rank $k$ is small, FFSRQR is comparable to RSISVD, but FFSRQR is better when $k$ is larger. 

\begin{figure}
\begin{center}
\subfloat[Type 1: Random square matrix]{\label{Fig:random_sq_time}\includegraphics[width=0.5\linewidth]{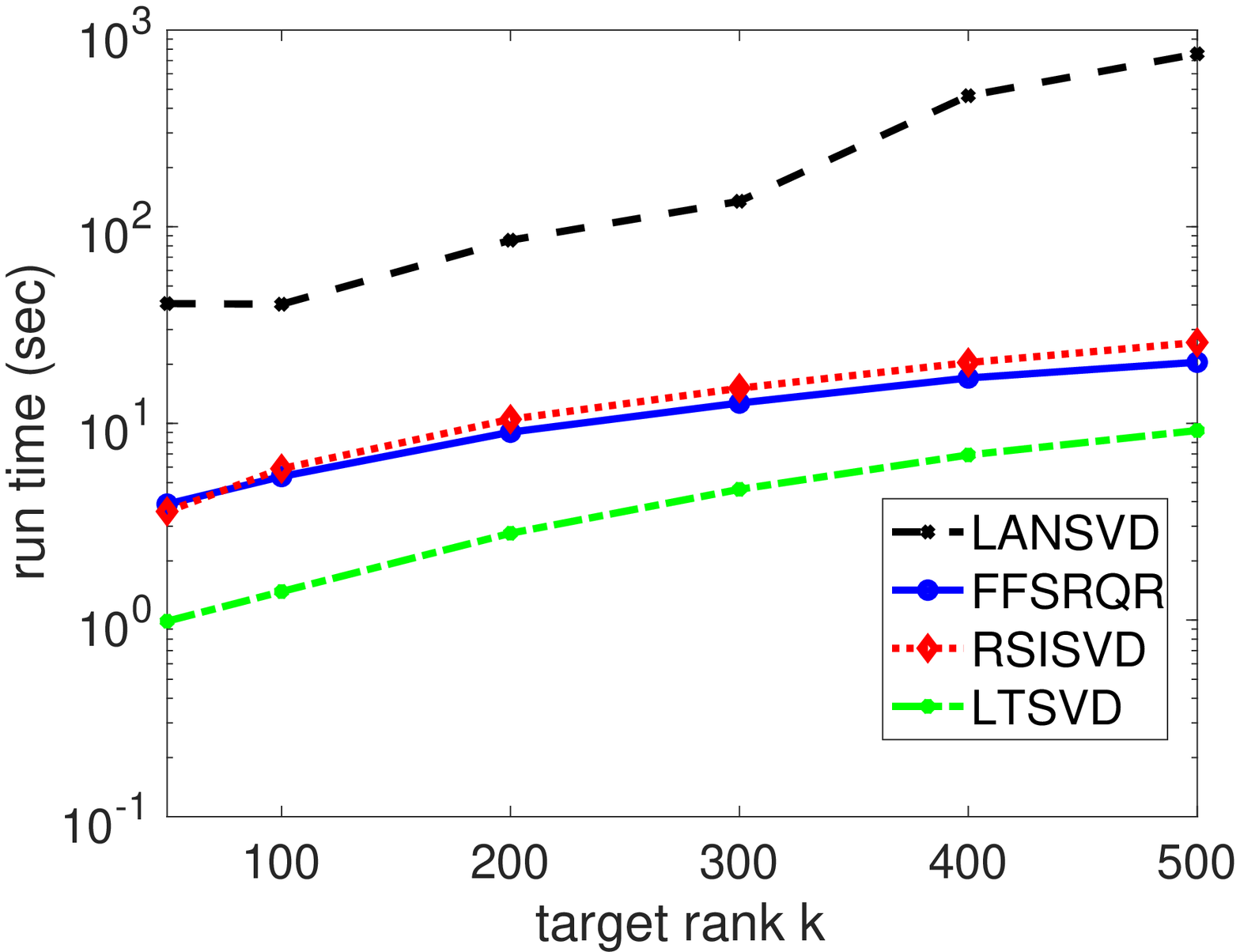}}
\subfloat[Type 1: Random short-fat matrix]{\label{Fig:random_sf_time}\includegraphics[width=0.5\linewidth]{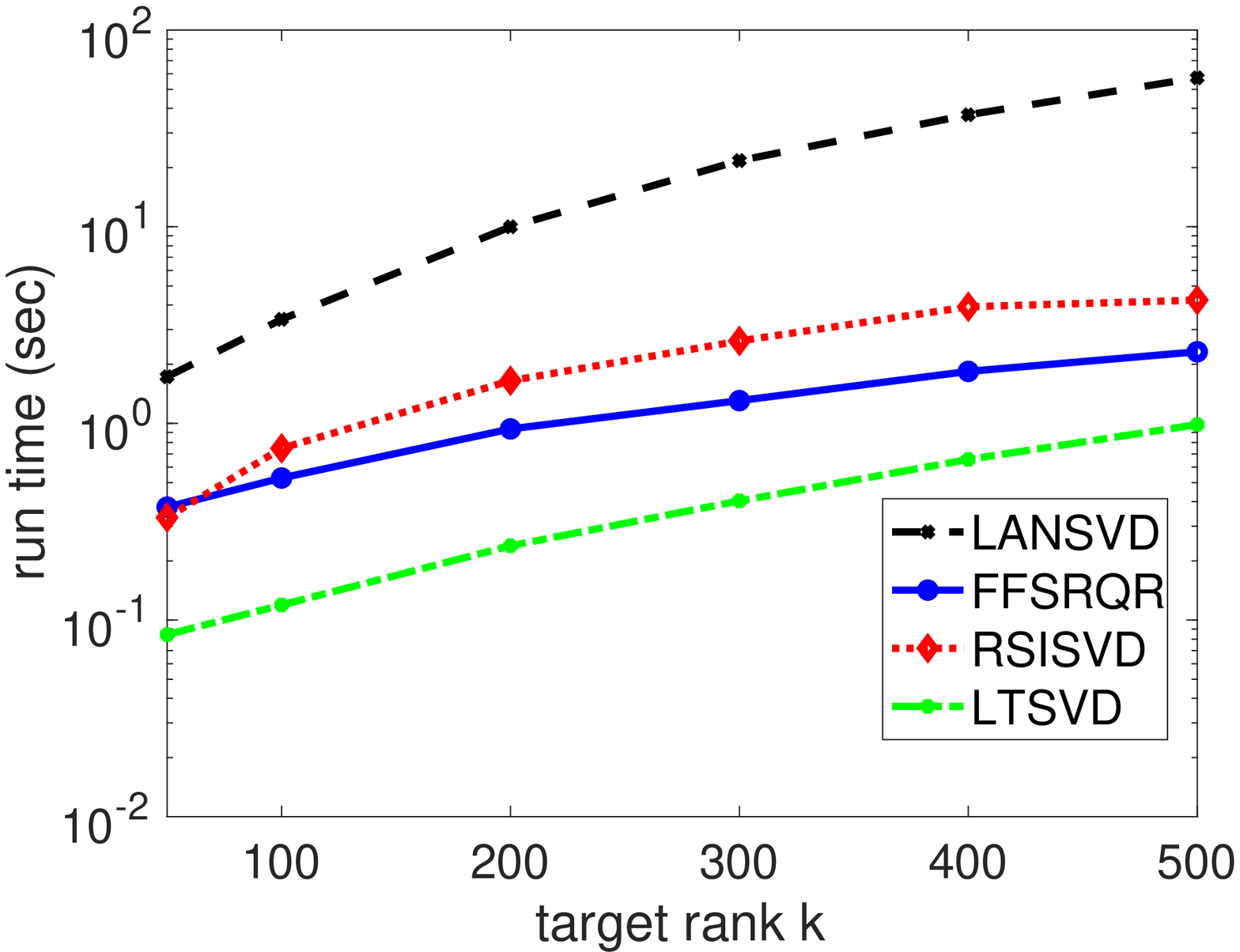}}\\
\subfloat[Type 1: Random tall-skinny matrix]{\label{Fig:random_ts_time}\includegraphics[width=0.5\linewidth]{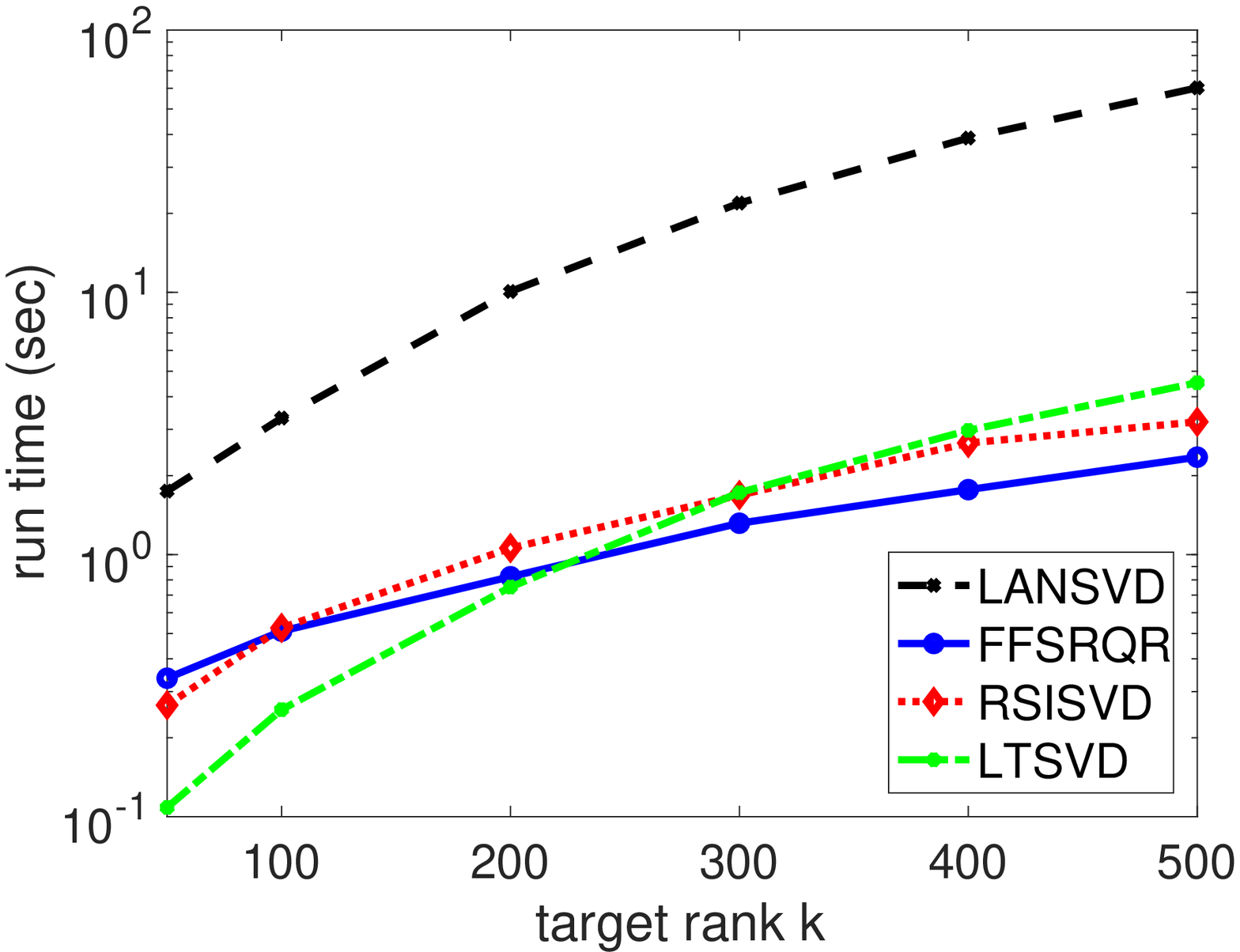}}
\subfloat[Type 2: GEMAT11]{\label{Fig:gemat11_time}\includegraphics[width=0.5\linewidth]{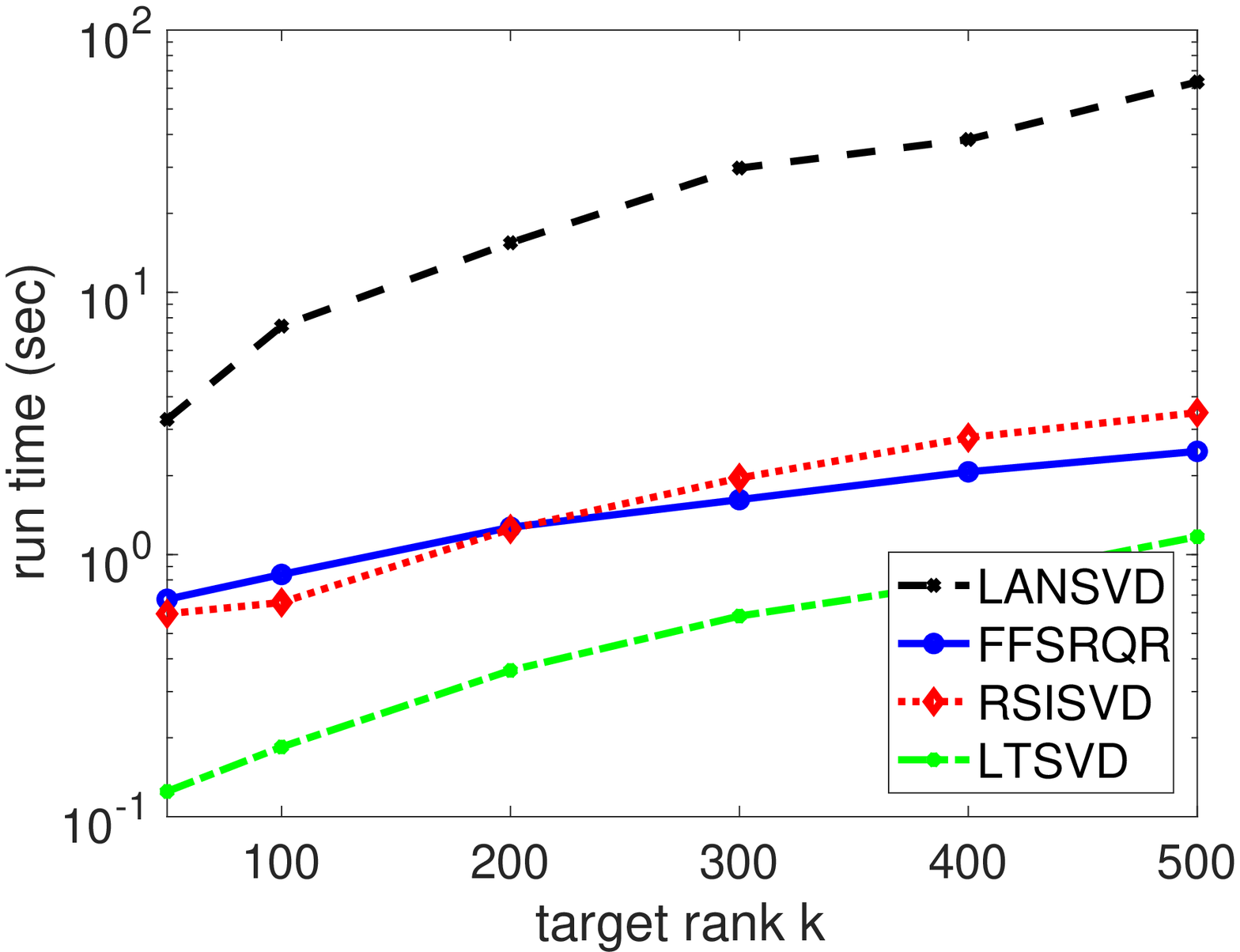}}
\end{center}
\caption{Run time comparison for approximate SVD algorithms.}\label{Fig: run time of matrix}
\end{figure}

\begin{figure}
\begin{center}
\subfloat[Type 1: Random square matrix]{\label{Fig:random_sq_error}\includegraphics[width=0.5\linewidth]{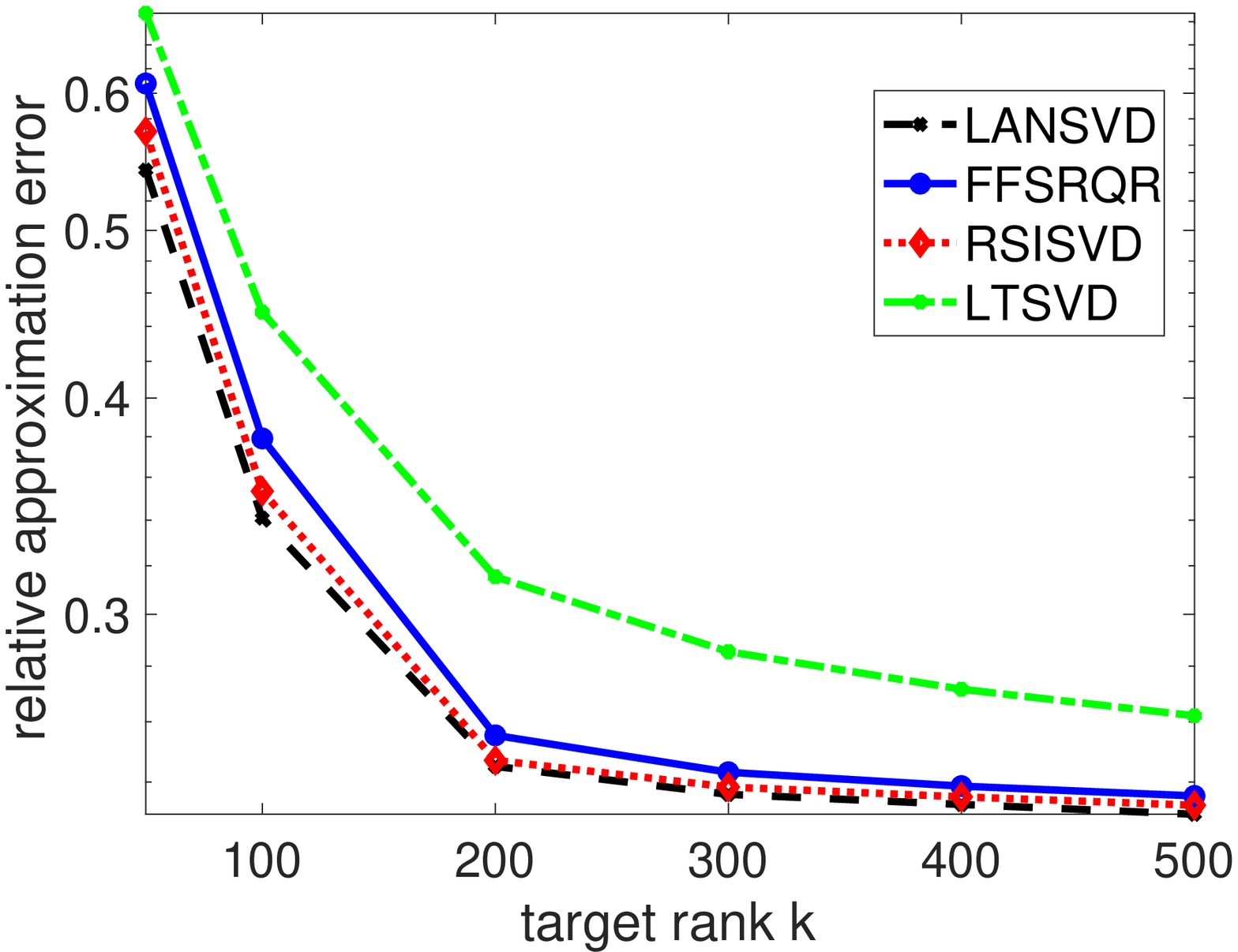}}
\subfloat[Type 1: Random short-fat matrix]{\label{Fig:random_sf_error}\includegraphics[width=0.5\linewidth]{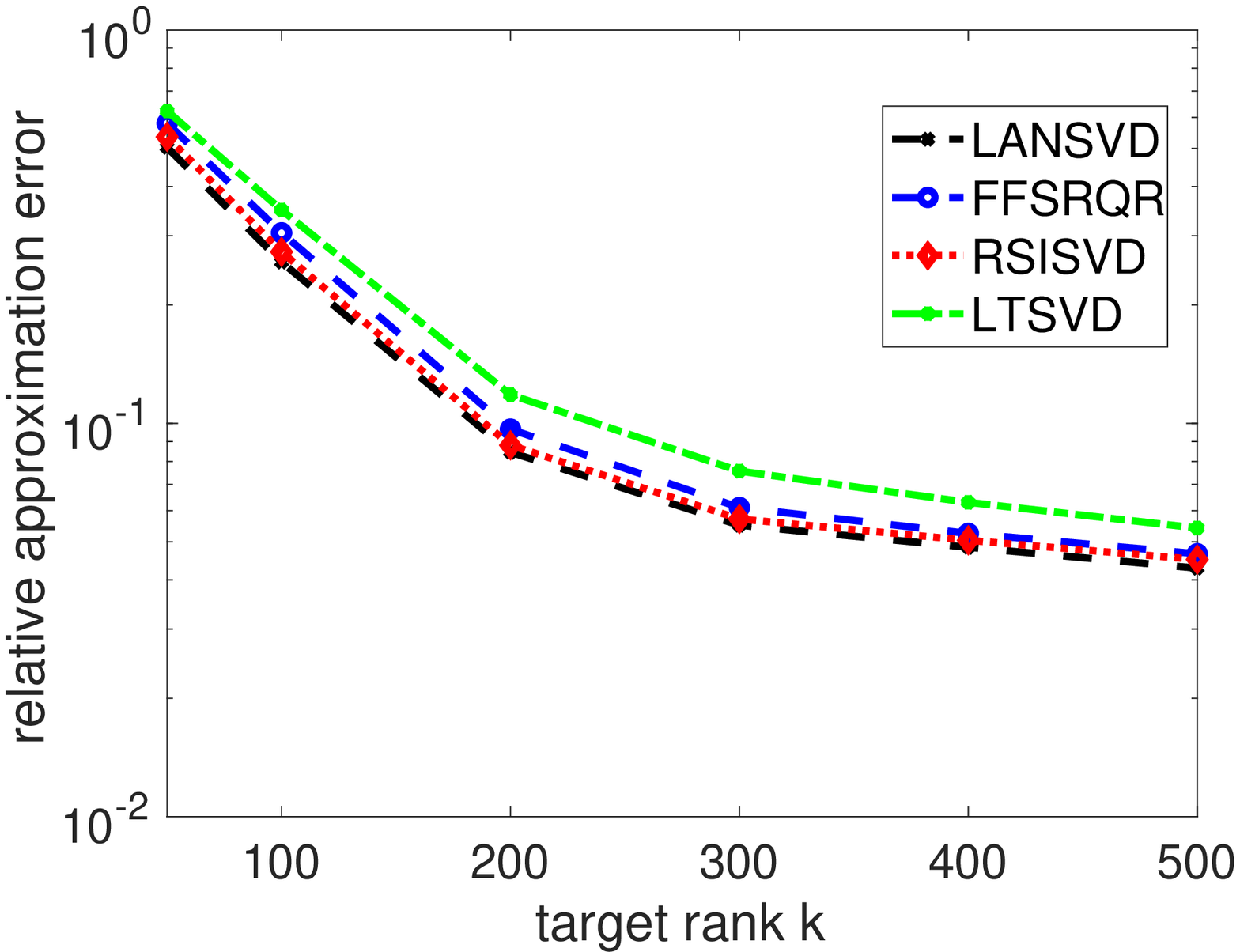}}\\
\subfloat[Type 1: Random tall-skinny matrix]{\label{Fig:random_ts_error}\includegraphics[width=0.5\linewidth]{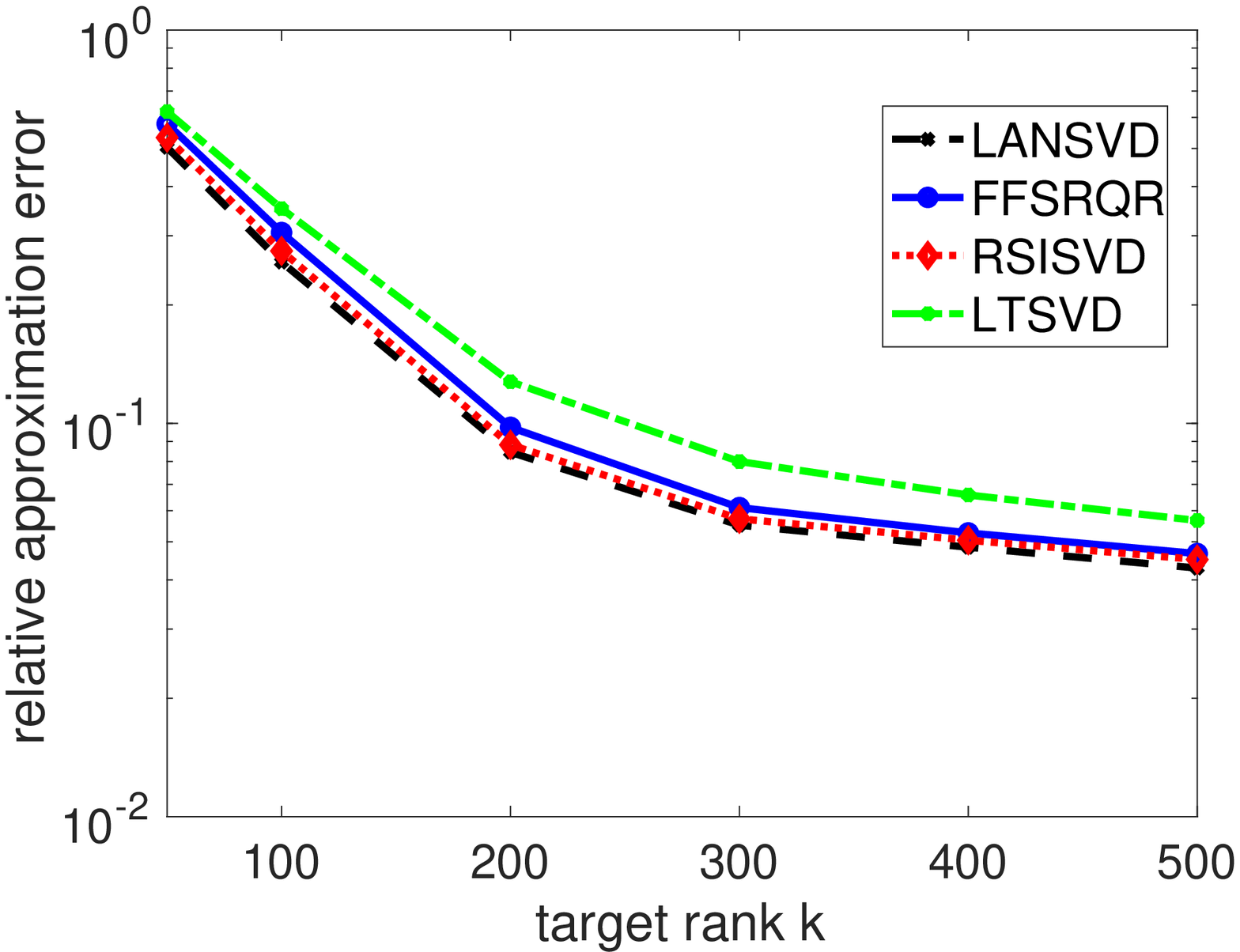}}
\subfloat[Type 2: GEMAT11]{\label{Fig:gemat11_error}\includegraphics[width=0.5\linewidth]{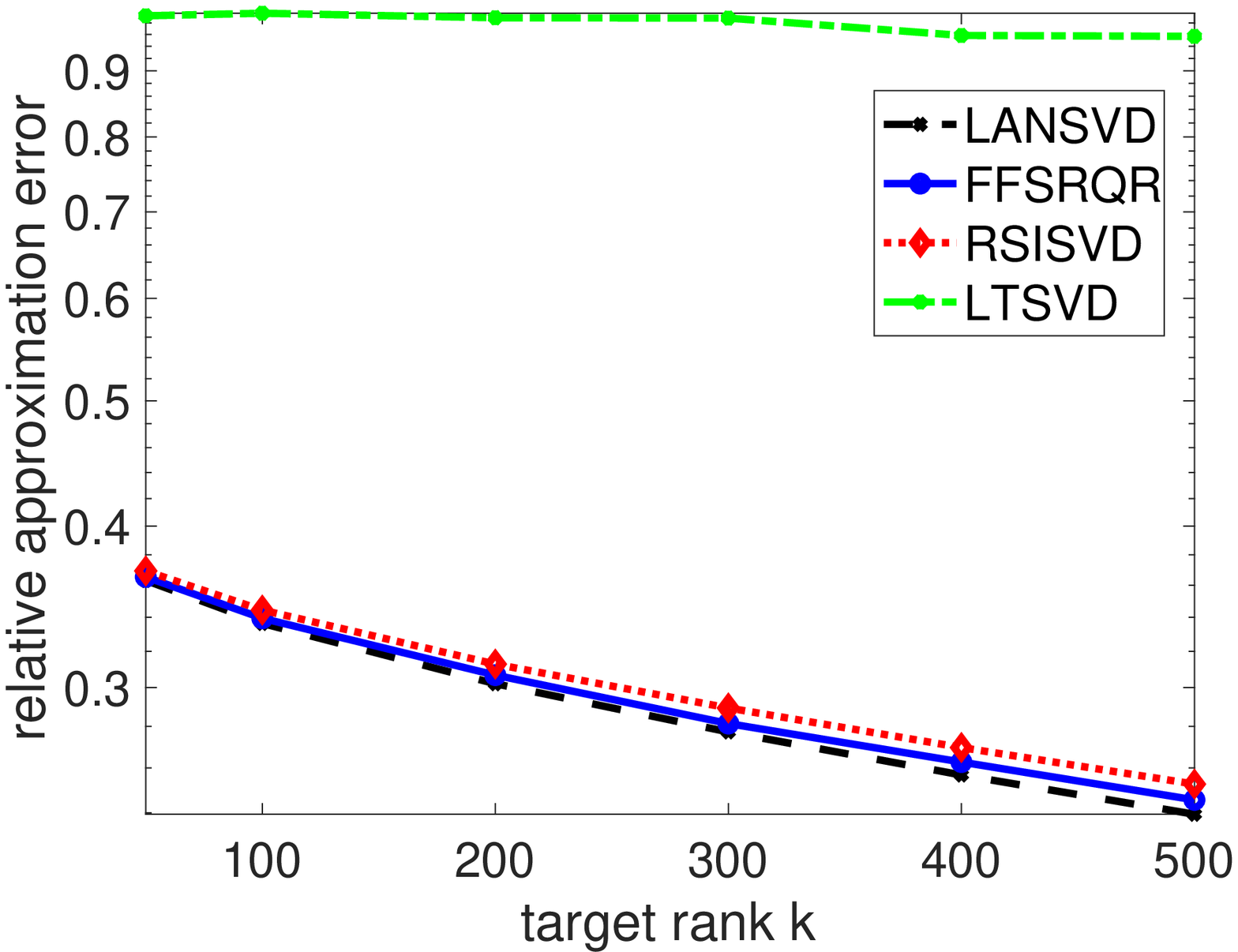}}
\end{center}
\caption{Relative approximation error comparison for approximate SVD algorithms.}\label{Fig: relative approximation error of matrix}
\end{figure}

\begin{figure}
\begin{center}
\subfloat[Type 1: Random square matrix]{\label{Fig:random_sq_sv}\includegraphics[width=0.5\linewidth]{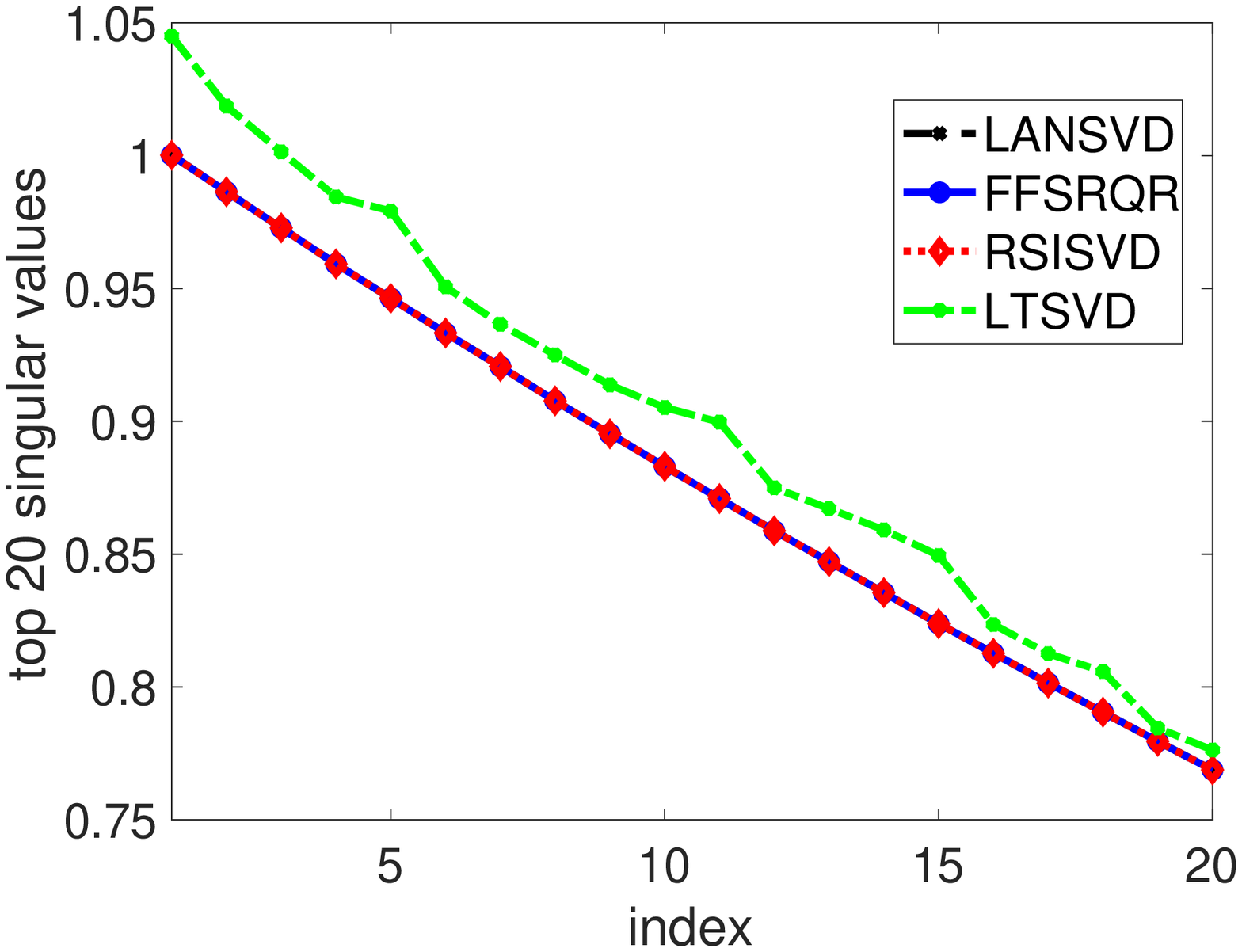}}
\subfloat[Type 1: Random short-fat matrix]{\label{Fig:random_sf_sv}\includegraphics[width=0.5\linewidth]{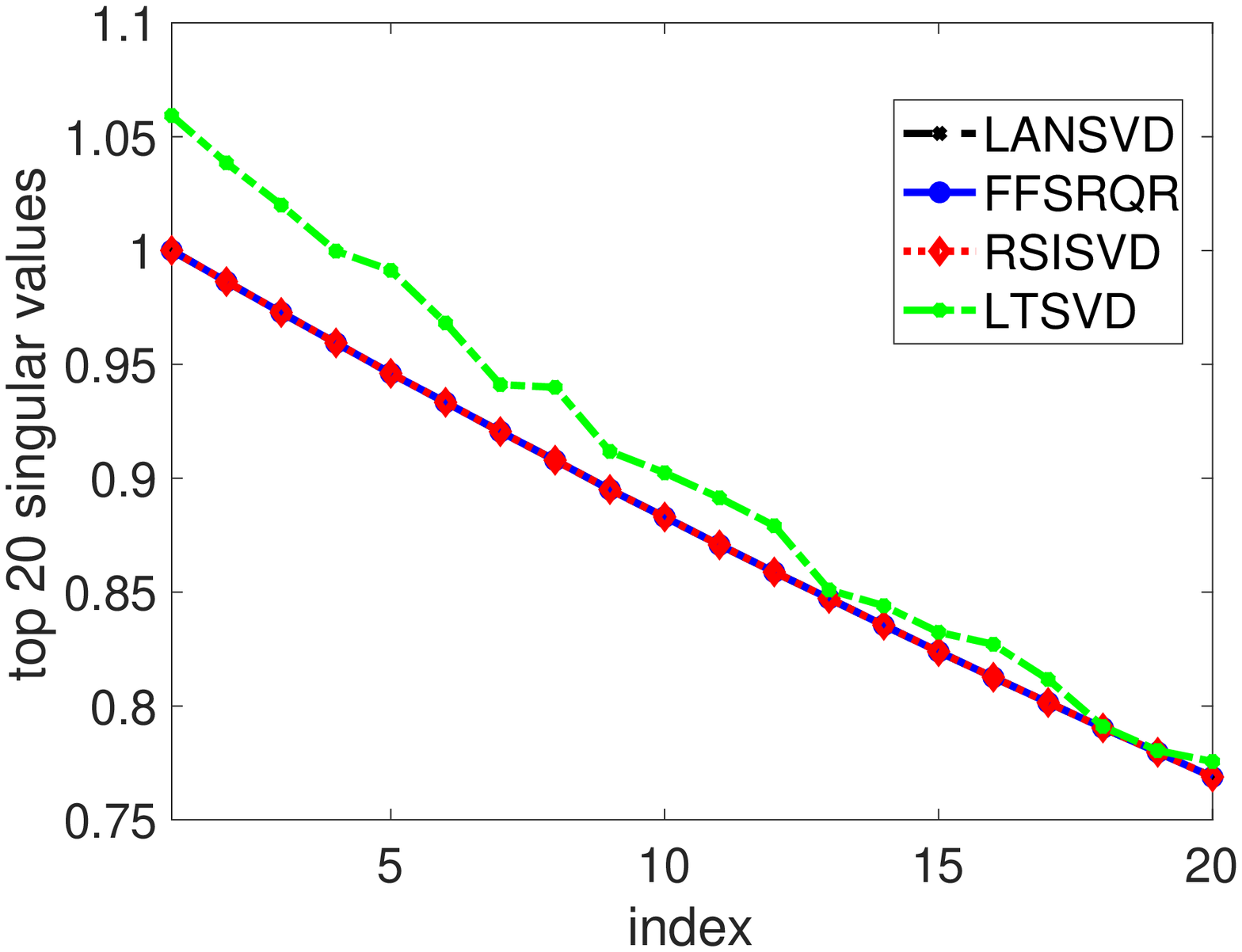}}\\
\subfloat[Type 1: Random tall-skinny matrix]{\label{Fig:random_ts_sv}\includegraphics[width=0.5\linewidth]{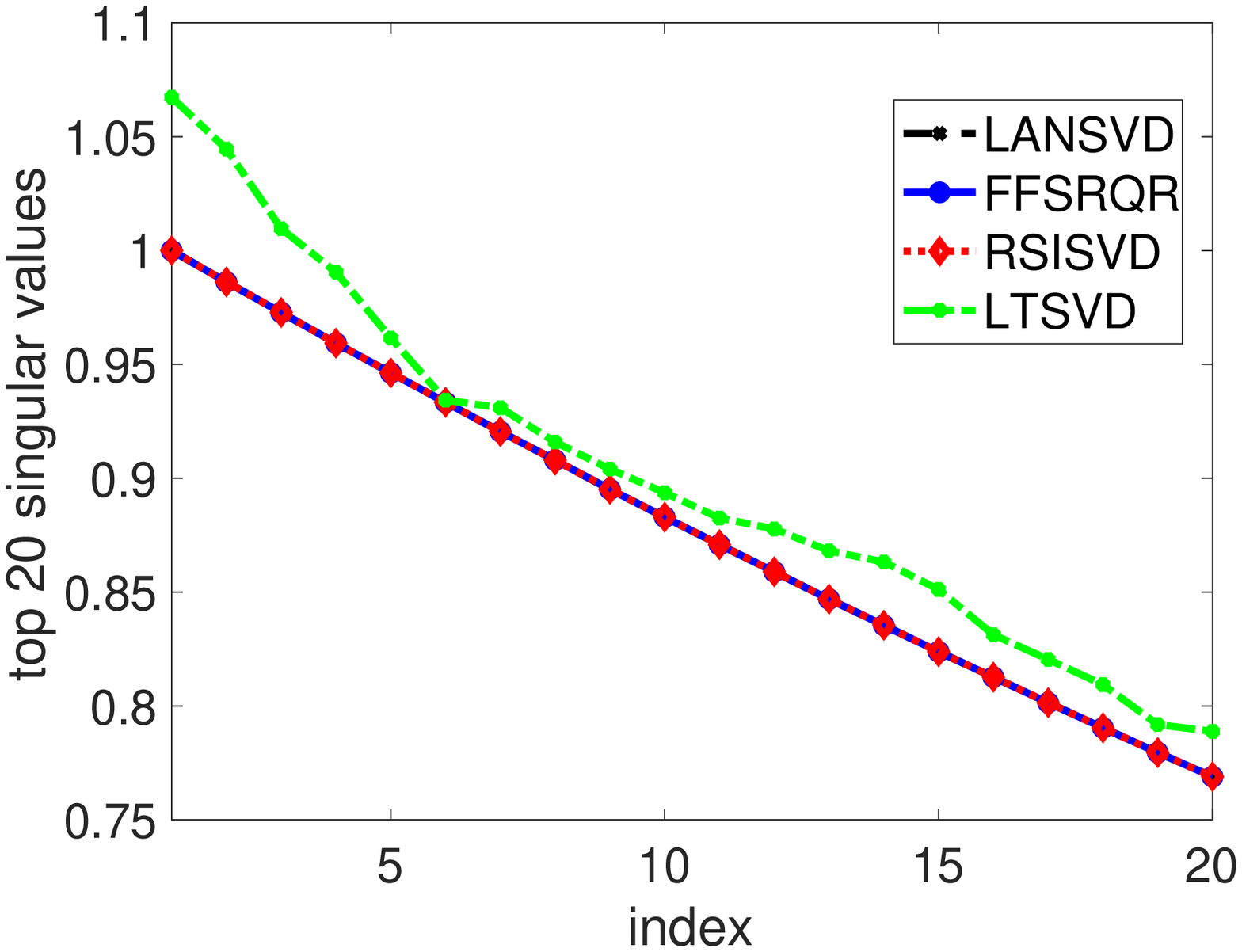}}
\subfloat[Type 2: GEMAT11]{\label{Fig:gemat11_sv}\includegraphics[width=0.5\linewidth]{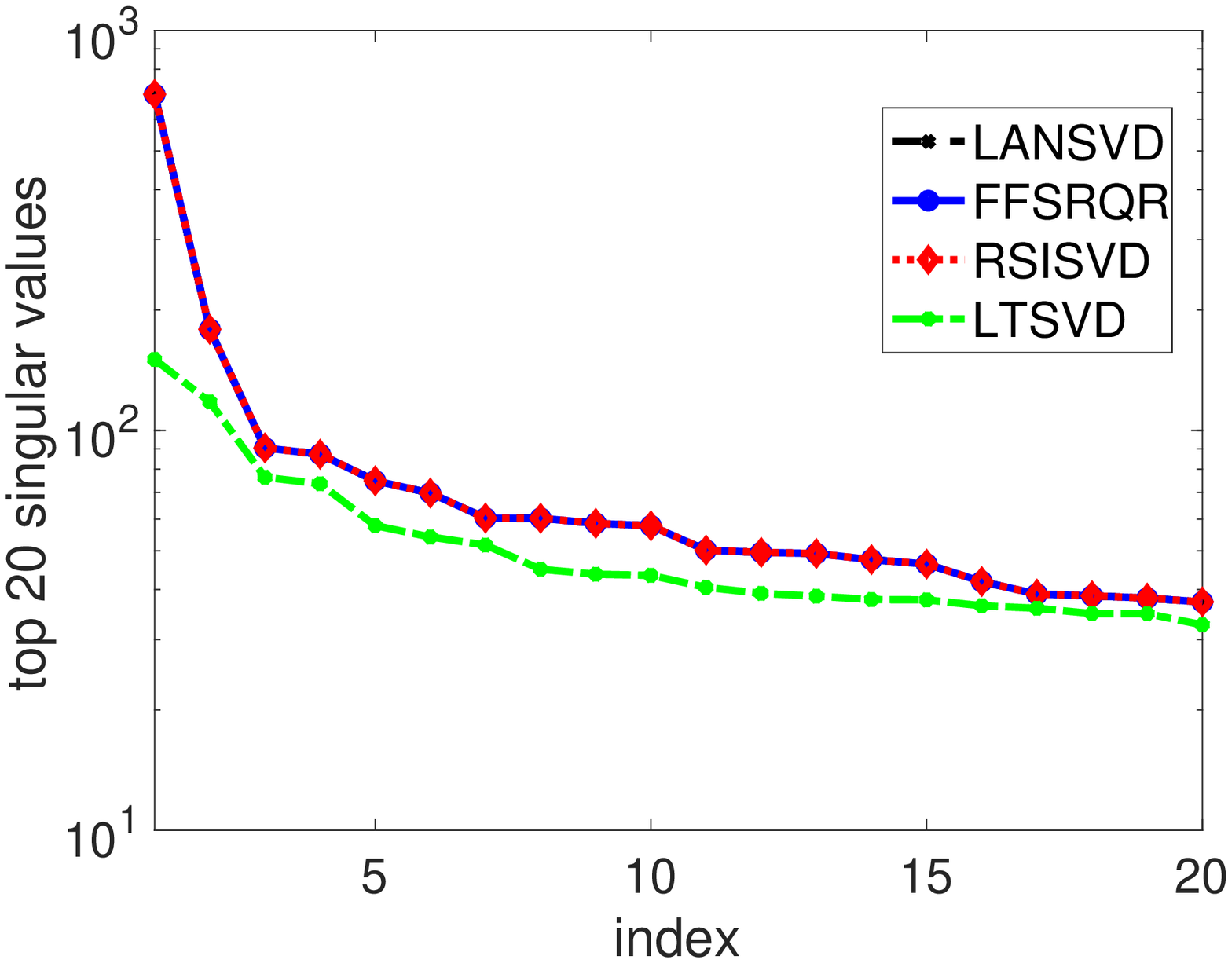}}
\end{center}
\caption{Top 20 singular values comparison for approximate SVD algorithms.}\label{Fig: top 20 singular values of matrix}
\end{figure}

\subsection{Tensor Approximation}
This section illustrates the effectiveness and efficiency of FFSRQR for computing approximate tensor. Sequentially truncated higher-order SVD (ST-HOSVD) \cite{andersson1998improving,vannieuwenhoven2012new} is one of the most efficient algorithms to compute Tucker decomposition of tensors, and the most costing part of this algorithm is to compute SVD or approximate SVD of the tensor unfoldings. Truncated SVD and randomized SVD with subspace iteration (RSISVD) are used in routines MLSVD and MLSVD\_RSI respectively in Matlab tensorlab toolbox \cite{Tensorlab2016}. Based on this Matlab toolbox, we implement ST-HOSVD using FFSRQR or LTSVD to do the SVD approximation. We name these two new routines by MLSVD\_FFSRQR and MLSVD\_LTSVD respectively. We compare these four routines in this numerical experiment. We also have Python codes for this tensor approximation numerical experiment. We don't list the results of python here but they are similar to those of Matlab. 
\subsubsection{A Sparse Tensor Example}
We test on a sparse tensor $\mathcal{X} \in \mathbb{R}^{n \times n \times n}$ of the following format \cite{sorensen2016deim, saibaba2016hoid},
\begin{equation*}
\mathcal{X} = \sum_{j=1}^{10} \frac{1000}{j}x_j \circ y_j \circ z_j + \sum_{j=11}^{n} \frac{1}{j} x_j \circ y_j \circ z_j,
\end{equation*}
where $x_j, y_j, z_j \in \mathbb{R}^n$ are sparse vectors with nonnegative entries. The symbol ``$\circ$'' represents the vector outer product. We compute a rank-$\left(k,k,k\right)$ Tucker decomposition $[\mathcal{G};U_1,U_2,U_3]$ using MLSVD, MLSVD\_FFSRQR,\ MLSVD\_RSI, and MLSVD\_LTSVD respectively. The relative approximation error is measured by ${\|\mathcal{X} - \mathcal{X}_k \|_F}/{\|\mathcal{X}\|_F}$ where $\mathcal{X}_k = \mathcal{G} \times_1 U_1 \times_2 U_2 \times_3 U_3$.

Figure \ref{Fig: results of sparse tensor with size 300x300x300} compares efficiency and accuracy of different methods on a $400 \times 400 \times 400$ sparse tensor approximation problem. MLSVD\_LTSVD is the fastest but the least accurate one. The other three methods have similar accuracy while MLSVD\_FFSRQR is faster when target rank $k$ is larger.

\begin{figure}
\begin{center}
\subfloat{\label{Fig:sparse_err}\includegraphics[width=0.5\linewidth]{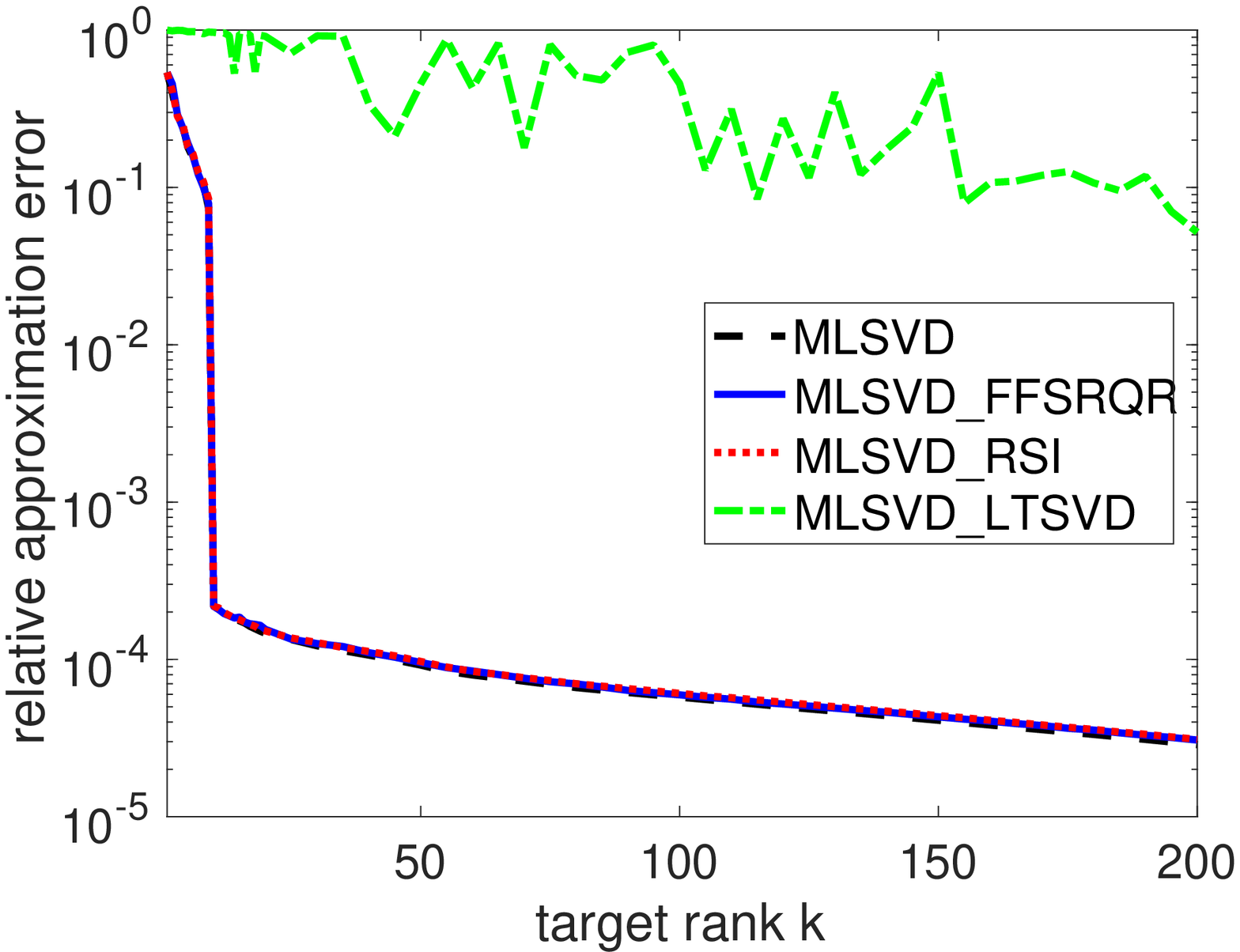}}
\subfloat{\label{Fig:sparse_time}\includegraphics[width=0.5\linewidth]{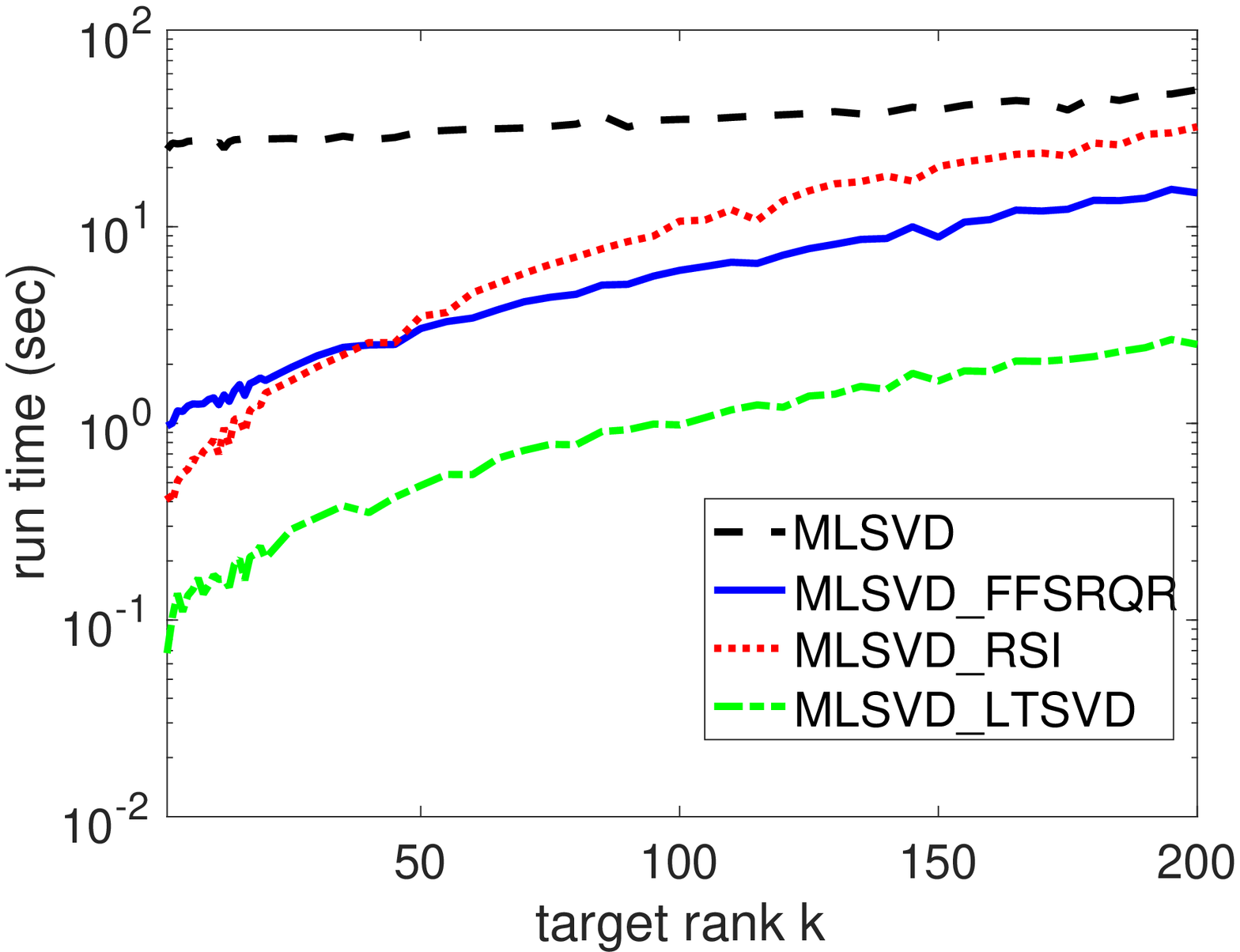}}
\end{center}
\caption{Run time and relative approximation error comparison on a sparse tensor.}\label{Fig: results of sparse tensor with size 300x300x300} 
\end{figure}

\subsubsection{Handwritten Digits Classification}
MNIST is a handwritten digits image data set created by Yann LeCun \cite{lecun1998gradient}. Every digit is represented by a $28 \times 28$ pixel image. Handwritten digits classification is to train a classification model to classify new unlabeled images. A HOSVD algorithm is proposed by Savas and Eld$\acute{e}$n \cite{savas2007handwritten} to classify handwritten digits. To reduce the training time, a more efficient ST-HOSVD algorithm is introduced in \cite{vannieuwenhoven2012new}.

We do handwritten digits classification using MNIST which consists of $60,000$ training images and $10,000$ test images. The number of training images in each class is restricted to $5421$ so that the training set are equally distributed over all classes. The training set is represented by a tensor $\mathcal{X}$ of size $786 \times 5421 \times 10$. The classification relies on Algorithm $2$ in \cite{savas2007handwritten}. We use various algorithms to obtain an approximation $\mathcal{X} \approx \mathcal{G} \times_1 U_1 \times_2 U_2 \times_3 U_3$ where the core tensor $\mathcal{G}$ has size $65 \times 142 \times 10$.

The results are summarized in Table \ref{Tab: handwritten classification results}. In terms of run time, our method MLSVD\_FFSRQR is comparable to MLSVD\_RSI while MLSVD is the most expensive one and MLSVD\_LTSVD is the fastest one. In terms of classification quality, MLSVD, MLSVD\_FFSRQR, and MLSVD\_RSI are comparable while MLSVD\_LTSVD is the least accurate one. 

\begin{table}
\begin{center}
\scalebox{0.85}{
\begin{tabular}{rcccc}
&  MLSVD & MLSVD\_FFSRQR & MLSVD\_RSI & MLSVD\_LTSVD \\
\hline
Training Time [sec] & $27.2121$ & $1.5455$ & $1.9343$ & $0.4266$ \\
Relative Model Error &  $0.4099$ & $0.4273$ &  $0.4247$ & $0.5162$\\
Classification Accuracy &    95.19\%  &  94.98\%  &  95.05\% &  92.59\% \\
\hline      
\end{tabular}
}
\end{center}
\caption{Comparison on handwritten digits classification.}\label{Tab: handwritten classification results}
\end{table}

\subsection{Solving Nuclear Norm Minimization Problem}
To show the effectiveness of FFSRQR algorithm in nuclear norm minimization problems, we investigate two scenarios: robust PCA \eqref{Eqn: nuclear norm minimization of RPCA problem} and matrix completion \eqref{Eqn: nuclear norm minimization of matrix completion problem}. The test matrix used in robust PCA is introduced in \cite{lin2010augmented} and the test matrices used in matrix completion are two real data sets. We use IALM method \cite{lin2010augmented} to solve both problems and IALM's code can be downloaded from \href{http://perception.csl.illinois.edu/matrix-rank/sample_code.html}{IALM}.

\subsubsection{Robust PCA}
To solve the robust PCA problem, we replace the approximate SVD part in IALM method \cite{lin2010augmented} by various methods. We denote the actual solution to the robust PCA problem by a matrix pair $\left(X^*,E^*\right) \in \mathbb{R}^{m \times n} \times \mathbb{R}^{m \times n}$. Matrix $X^* = X_L X_R^T$ where $X_L \in \mathbb{R}^{m \times k},~ X_R \in \mathbb{R}^{n \times k}$ are random matrices where the entries are independently sampled from normal distribution. Sparse matrix $E^*$ is a random matrix where its non-zero entries are independently sampled from a uniform distribution over the interval $[-500,500]$. The input to the IALM algorithm has the form $M = X^*+E^*$ and the output is denoted by $\left(\widehat{X}, \widehat{E}\right)$. In this numerical experiment, we use the same parameter settings as the IALM code for robust PCA: rank $k$ is $0.1m$ and number of non-zero entries in $E$ is $0.05m^2$. We choose the trade-off parameter $\lambda = 1 / \sqrt{\max\left(m,n\right)}$ as suggested by Cand$\grave{e}$s et al. \cite{candes2011robust}. The solution quality is measured by the normalized root mean square error ${\|\widehat{X}-X^*\|_F}/{\|X^*\|_F}$.

Table \ref{Tab: results of IALM on RPCA} includes relative error, run time, the number of non-zero entries in $\widehat{E}$ ($\|\widehat{E}\|_0$), iteration count, and the number of non-zero singular values (\#sv) in $\widehat{X}$ of IALM algorithm using different approximate SVD methods. We observe that IALM\_FFSRQR is faster than all the other three methods, while its error is comparable to IALM\_LANSVD and IALM\_RSISVD. IALM\_LTSVD is relatively slow and not effective. 

\begin{table}
\begin{center}
\scalebox{0.9}{\begin{tabular}{|c|c|ccccc|}
\hline
Size & Method & Error & Time (sec) & $\|\widehat{E}\|_0$ & Iter & \#sv \\
\hline
\multirow{4}{*}{$1000\times 1000$} & IALM\_LANSVD & $3.33e-07$ & $5.79e+00$ & $50000$ & $22$ & $100$ \\
& IALM\_FFSRQR & $2.79e-07$ & $1.02e+00$ & $50000$ & $25$ & $100$ \\
& IALM\_RSISVD & $3.36e-07$ & $1.09e+00$ & $49999$ & $22$ & $100$ \\
& IALM\_LTSVD & $9.92e-02$ & $3.11e+00$ & $999715$ & $100$ & $100$ \\
\hline
\multirow{4}{*}{$2000\times 2000$} & IALM\_LANSVD &$2.61e-07$ & $5.91e+01$ & $199999$ & $22$ & $200$ \\
& IALM\_FFSRQR & $1.82e-07$ & $6.93e+00$ & $199998$ & $25$ & $200$ \\
& IALM\_RSISVD & $2.63e-07$ & $7.38e+00$ & $199996$ & $22$ & $200$\\
& IALM\_LTSVD & $8.42e-02$ & $2.20e+01$ & $3998937$ & $100$ & $200$ \\
\hline
\multirow{4}{*}{$4000\times 4000$} & IALM\_LANSVD &$1.38e-07$ & $4.65e+02$ & $799991$ & $23$ & $400$ \\
& IALM\_FFSRQR & $1.39e-07$ & $4.43e+01$ & $800006$ & $26$ & $400$ \\
& IALM\_RSISVD & $1.51e-07$ & $5.04e+01$ & $799990$ & $23$ & $400$ \\
& IALM\_LTSVD & $8.94e-02$ & $1.54e+02$ & $15996623$ & $100$ & $400$ \\
\hline
\multirow{4}{*}{$6000\times 6000$} & IALM\_LANSVD &$1.30e-07$ & $1.66e+03$ & $1799982$ & $23$ & $600$ \\
& IALM\_FFSRQR & $1.02e-07$ & $1.42e+02$ & $1799993$ & $26$ & $600$  \\
& IALM\_RSISVD & $1.44e-07$ & $1.62e+02$ & $1799985$ & $23$ & $600$ \\
& IALM\_LTSVD & $8.58e-02$ & $5.55e+02$ & $35992605$ & $100$ & $600$ \\
\hline
\end{tabular}}
\end{center}
\caption{Comparison on robust PCA.}\label{tab: results of IALM on RPCA}
\end{table}

\subsubsection{Matrix Completion}
We solve matrix completion problems on two real data sets used in \cite{toh2010accelerated}: the Jester joke data set \cite{goldberg2001eigentaste} and the MovieLens data set \cite{herlocker1999algorithmic}. The Jester joke data set consists of $4.1$ million ratings for $100$ jokes from $73,421$ users and can be downloaded from the website \href{http://goldberg.berkeley.edu/jester-data/}{Jester}. We test on the following data matrices:
\begin{itemize}
    \item {\bf jester-$1$}: Data from $24,983$ users who have rated $36$ or more jokes;
    \item {\bf jester-$2$}: Data from $23,500$ users who have rated $36$ or more jokes;
    \item {\bf jester-$3$}: Data from $24,938$ users who have rated between $15$ and $35$ jokes;
    \item {\bf jester-all}: The combination of {\bf jester-$1$}, {\bf jester-$2$}, and {\bf jester-$3$}.
\end{itemize}

The MovieLens data set can be downloaded from \href{https://grouplens.org/datasets/movielens/}{MovieLens}. We test on the following data matrices:
\begin{itemize}
    \item {\bf movie-$100$K}: $100,000$ ratings of $943$ users for $1682$ movies;
    \item {\bf movie-$1$M}: $1$ million ratings of $6040$ users for $3900$ movies;
    \item {\bf movie-latest-small}: $100,000$ ratings of $700$ users for $9000$ movies.
\end{itemize}

For each data set, we let $M$ be the original data matrix where $M_{ij}$ stands for the rating of joke (movie) $j$ by user $i$ and $\Gamma$ be the set of indices where $M_{ij}$ is known. The matrix completion algorithm quality is measured by the Normalized Mean Absolute Error (NMAE) which is defined by 
\begin{equation*}
\mbox{NMAE} \stackrel{def}{=} \frac{\frac{1}{\left|\Gamma\right|} \sum_{\left(i,j\right) \in \Gamma} |M_{ij} - X_{ij}|}{r_{\max} - r_{\min}},
\end{equation*}
where $X_{ij}$ is the prediction of the rating of joke (movie) $j$ given by user $i$, and $r_{\min}, r_{\max}$ are lower and upper bounds of the ratings respectively. For the Jester joke data sets we set $r_{\min} = -10$ and $r_{\max} = 10$. For the MovieLens data sets we set $r_{\min} = 1$ and $r_{\max} = 5$.

Since $\left|\Gamma\right|$ is large, we randomly select a subset $\Omega$ from $\Gamma$ and then use Algorithm \ref{Alg: ialm_mc} to solve the problem \eqref{Eqn: nuclear norm minimization of matrix completion problem}. We randomly select $10$ ratings for each user in the Jester joke data sets, while we randomly choose about $50\%$ of the ratings for each user in the MovieLens data sets. Table \ref{Tab: parameters in IALM algorithm} includes parameter settings in the algorithms. The maximum iteration number is $100$ in IALM, and all other parameters are the same as those used in \cite{lin2010augmented}. 

The numerical results are included in Table \ref{Tab: Numerical results on real data sets}. We observe that IALM\_FFSRQR achieves almost the same recoverability as other methods except for IALM\_LTSVD, and is slightly faster than IALM\_RSISVD for these two data sets.

\begin{table}
\begin{center}
\begin{tabular}{ccccc}
\hline
Data set & $m$ & $n$ & $|\Gamma|$ & $|\Omega|$  \\
\hline
jester-1 & $24983$ & $100$ & $1.81e+06$ & $249830$  \\
jester-2 & $23500$ & $100$ & $1.71e+06$ & $235000$  \\
jester-3 & $24938$ & $100$ & $6.17e+05$ & $249384$  \\
jester-all & $73421$ & $100$ & $4.14e+06$ & $734210$  \\
moive-100K & $943$ & $1682$ & $1.00e+05$ & $49918$   \\
moive-1M & $6040$ & $3706$ & $1.00e+06$ & $498742$   \\
moive-latest-small & $671$ & $9066$ & $1.00e+05$ & $52551$  \\
\hline
\end{tabular}
\end{center}
\caption{Parameters used in the IALM method on matrix completion.}\label{Tab: parameters in IALM algorithm}
\end{table}

\begin{table}
\begin{center}
\scalebox{0.85}{\begin{tabular}{clcccccc}
\hline
Data set & Method & Iter & Time & NMAE & \#sv & $\sigma_{\max}$ & $\sigma_{min}$ \\
\hline
\multirow{4}{*}{jester-1} & IALM-LANSVD & $12$ & $7.06e+00$ & $1.84e-01$ & $100$ & $2.14e+03$ & $1.00e+00$ \\
& IALM-FFSRQR & $12$ & $3.44e+00$ & $1.69e-01$ & $100$ & $2.28e+03$ & $1.00e+00$ \\
& IALM-RSISVD & $12$ & $3.75e+00$ & $1.89e-01$ & $100$ & $2.12e+03$ & $1.00e+00$ \\
& IALM-LTSVD & $100$ & $2.11e+01$ & $1.74e-01$ & $62$ & $3.00e+03$ & $1.00e+00$ \\
\hline
\multirow{4}{*}{jester-2} & IALM-LANSVD & $12$ & $6.80e+00$ & $1.85e-01$ & $100$ & $2.13e+03$ & $1.00e+00$ \\
& IALM-FFSRQR & $12$ & $2.79e+00$ & $1.70e-01$ & $100$ & $2.29e+03$ & $1.00e+00$  \\
& IALM-RSISVD & $12$ & $3.59e+00$ & $1.91e-01$ & $100$ & $2.12e+03$ & $1.00e+00$ \\
& IALM-LTSVD & $100$ & $2.03e+01$ & $1.75e-01$ & $58$ & $2.96e+03$ & $1.00e+00$  \\
\hline
\multirow{4}{*}{jester-3} & IALM-LANSVD & $12$ & $7.05e+00$ & $1.26e-01$ & $99$ & $1.79e+03$ & $1.00e+00$ \\
& IALM-FFSRQR &  $12$ & $3.03e+00$ & $1.22e-01$ & $100$ & $1.71e+03$ & $1.00e+00$ \\
& IALM-RSISVD & $12$ & $3.85e+00$ & $1.31e-01$ & $100$ & $1.78e+03$ & $1.00e+00$ \\
& IALM-LTSVD & $100$ & $2.12e+01$ & $1.33e-01$ & $55$ & $2.50e+03$ & $1.00e+00$ \\
\hline
\multirow{4}{*}{jester-all} & IALM-LANSVD & $12$ & $2.39e+01$ & $1.72e-01$ & $100$ & $3.56e+03$ & $1.00e+00$ \\
& IALM-FFSRQR & $12$ & $1.12e+01$ & $1.62e-01$ & $100$ & $3.63e+03$ & $1.00e+00$ \\
& IALM-RSISVD & $12$ & $1.34e+01$ & $1.82e-01$ & $100$ & $3.47e+03$ & $1.00e+00$ \\
& IALM-LTSVD & $100$ & $6.99e+01$ & $1.68e-01$ & $52$ & $4.92e+03$ & $1.00e+00$ \\
\hline
\multirow{4}{*}{moive-100K} & IALM-LANSVD & $29$ & $2.86e+01$ & $1.83e-01$ & $285$ & $1.21e+03$ & $1.00e+00$ \\
& IALM-FFSRQR & $30$ & $4.55e+00$ & $1.67e-01$ & $295$ & $1.53e+03$ & $1.00e+00$ \\
& IALM-RSISVD & $29$ & $4.82e+00$ & $1.82e-01$ & $285$ & $1.29e+03$ & $1.00e+00$ \\
& IALM-LTSVD & $48$ & $1.42e+01$ & $1.47e-01$ & $475$ & $1.91e+03$ & $1.00e+00$ \\
\hline
\multirow{4}{*}{moive-1M} & IALM-LANSVD & $50$ & $7.40e+02$ & $1.58e-01$ & $495$ & $4.99e+03$ & $1.00e+00$ \\
& IALM-FFSRQR & $53$ & $2.07e+02$ & $1.37e-01$ & $525$ & $6.63e+03$ & $1.00e+00$ \\
& IALM-RSISVD & $50$ & $2.23e+02$ & $1.57e-01$ & $495$ & $5.35e+03$ & $1.00e+00$ \\
& IALM-LTSVD & $100$ & $8.50e+02$ & $1.17e-01$ & $995$ & $8.97e+03$ & $1.00e+00$ \\
\hline
\multirow{4}{*}{\shortstack[c]{moive-latest\\-small}}& IALM-LANSVD & $31$ & $1.66e+02$ & $1.85e-01$ & $305$ & $1.13e+03$ & $1.00e+00$ \\
& IALM-FFSRQR & $31$ & $1.96e+01$ & $2.00e-01$ & $305$ & $1.42e+03$ & $1.00e+00$ \\
 & IALM-RSISVD & $31$ & $2.85e+01$ & $1.91e-01$ & $305$ & $1.20e+03$ & $1.00e+00$ \\
 & IALM-LTSVD & $63$ & $4.02e+01$ & $2.08e-01$ & $298$ & $1.79e+03$ & $1.00e+00$  \\
\hline
\end{tabular}}
\end{center}
\caption{Comparison on matrix completion.}\label{Tab: Numerical results on real data sets}
\end{table}

\section{Conclusions}
We presented the Flip-Flop SRQR factorization, a variant of QLP factorization, to compute low-rank matrix approximations. The Flip-Flop SRQR algorithm uses SRQR factorization to initialize the truncated version of column pivoted QR factorization and then form an LQ factorization. For the numerical results presented, the errors in the proposed algorithm were comparable to those obtained from the other state-of-the-art algorithms. This new algorithm is cheaper to compute and produces quality low-rank matrix approximations. Furthermore, we prove singular value lower bounds and residual error upper bounds for the Flip-Flop SRQR factorization. In situations where singular values of the input matrix decay relatively quickly, the low-rank approximation computed by SRQR is guaranteed to be as accurate as truncated SVD. We also perform complexity analysis to show that Flip-Flop SRQR is faster than approximate SVD with randomized subspace iteration. Future work includes reducing the overhead cost in Flip-Flop SRQR and implementing Flip-Flop SRQR algorithm on distributed memory machines for popular applications such as distributed PCA. 

\newpage 
\section{Appendix}
\subsection{Approximate SVD with Randomized Subspace Iteration}
Randomized subspace iteration was proposed in \cite[Algorithm 4.4]{halko2011finding} to compute an orthonormal matrix whose range approximates the range of $A$. An approximate SVD can be computed using the aforementioned orthonormal matrix \cite[Algorithm 5.1]{halko2011finding}.
Randomized subspace iteration is used in routine MLSVD\_RSI in Matlab toolbox tensorlab \cite{Tensorlab2016}, and MLSVD\_RSI is by far the most efficient function to compute ST-HOSVD we can find in Matlab. We summarize approximate SVD with randomized subspace iteration pseudocode in Algorithm \ref{Alg: randomized SVD with randomized subspace iteration}.

\begin{algorithm}
\caption{Approximate SVD with Randomized Subspace Iteration}\label{Alg: randomized SVD with randomized subspace iteration}
\begin{algorithmic}
\STATE $\textbf{Inputs:}$
\STATE Matrix $A \in \mathbb{R}^{m \times n}$. Target rank $k$. Oversampling size $p\ge 0$. Number of iterations $q \ge 1$.
\STATE $\textbf{Outputs:}$
\STATE $U\in \mathbb{R}^{m \times k}$ contains the approximate top $k$ left singular vectors of $A$.
\STATE $\Sigma \in \mathbb{R}^{k \times k}$ contains the approximate top $k$ singular values of $A$.
\STATE $V\in \mathbb{R}^{n \times k}$ contains the approximate top $k$ right singular vectors of $A$.
\STATE $\textbf{Algorithm:}$
\STATE Generate i.i.d Gaussian matrix $\Omega \in \mathcal{N}\left(0,1\right)^{n \times \left(k+p\right)}$.
\STATE Compute $B = A \Omega$.
\STATE $[Q,\sim] = qr\left(B,0\right)$
\FOR{$i=1:q$}
\STATE $B = A^T * Q$
\STATE $[Q,\sim] = qr\left(B,0\right)$
\STATE $B = A * Q$
\STATE $[Q,\sim] = qr\left(B,0\right)$
\ENDFOR
\STATE $B = Q^T * A$
\STATE $[U,\Sigma,V] = svd\left(B\right)$
\STATE $U = Q * U$
\STATE $U = U\left(:,1:k\right)$
\STATE $\Sigma = \Sigma\left(1:k,1:k\right)$
\STATE $V = V\left(:,1:k\right)$
\end{algorithmic}
\end{algorithm}

Now we perform a complexity analysis on approximate SVD with randomized subspace iteration. We first note that 
\begin{enumerate}
  \item The cost of generating a random matrix is negligible.

  \item The cost of computing $B = A \Omega$ is $2 m n \left(k+p\right)$.

  \item In each QR step $[Q,\sim] = qr\left(B,0\right)$,  the cost of computing the QR factorization of $B$ is $2 m \left(k+p\right)^2 - \frac{2}{3}\left(k+p\right)^3$ (c.f. \cite{trefethen1997numerical}), and the cost of forming the first $\left(k+p\right)$ columns in the full $Q$ matrix is $m \left(k+p\right)^2 + \frac{1}{3}\left(k+p\right)^3$.
\end{enumerate}

Now we count the flops for each $i$ in the ${\bf for}$ loop: 
\begin{enumerate}
    \item The cost of computing $B = A^T * Q$ is $2 m n \left(k+p\right)$;
    \item The cost of computing $[Q,\sim] = qr\left(B,0\right)$ is $2 n \left(k+p\right)^2 - \frac{2}{3}\left(k+p\right)^3$, and the cost of forming the first $\left(k+p\right)$ columns in the full $Q$ matrix is $n \left(k+p\right)^2 + \frac{1}{3}\left(k+p\right)^3$;
    \item The cost of computing $B = A * Q$ is $2 m n \left(k+p\right)$;
    \item The cost of computing $[Q,\sim] = qr\left(B,0\right)$ is $2 m \left(k+p\right)^2 - \frac{2}{3}\left(k+p\right)^3$, and the cost of forming the first $\left(k+p\right)$ columns in the full $Q$ matrix is $m \left(k+p\right)^2 + \frac{1}{3}\left(k+p\right)^3$.  
\end{enumerate}
Putting together, the cost of running the ${\bf for}$ loop $q$ times is 
\[ q\left(4 m n \left(k + p\right) + 3 \left(m + n\right) \left(k + p\right)^2 - \frac{2}{3}\left(k+p\right)^3\right).\]

Additionally, the cost of computing $B = Q^T * A$ is $2 m n \left(k+p\right)$; the cost of doing SVD of $B$ is $O(n \left(k+p\right)^2)$; and the cost of computing $U = Q * U$ is $2 m \left(k+p\right)^2$.

Now assume $k+p \ll \min\left(m,n\right)$ and omit the lower-order terms, then we arrive at $\left(4q+4\right)mn\left(k+p\right)$ as the complexity of approximate SVD with randomized subspace iteration. In practice, $q$ is usually chosen to be an integer between $0$ and $2$.

\bibliographystyle{siamplain}
\bibliography{references}

\begin{thebibliography}{10}

\bibitem{alter2000singular}
{\sc O.~Alter, P.~O. Brown, and D.~Botstein}, {\em Singular value decomposition
  for genome-wide expression data processing and modeling}, Proceedings of the
  National Academy of Sciences, 97 (2000), pp.~10101--10106.

\bibitem{anderson1999lapack}
{\sc E.~Anderson, Z.~Bai, C.~Bischof, L.~S. Blackford, J.~Demmel, J.~Dongarra,
  J.~Du~Croz, A.~Greenbaum, S.~Hammarling, and A.~McKenney}, {\em LAPACK Users'
  Guide}, SIAM, Philadelphia, 1999.

\bibitem{andersson1998improving}
{\sc C.~A. Andersson and R.~Bro}, {\em Improving the speed of multi-way
  algorithms: Part {I. Tucker3}}, Chemometrics and Intelligent Laboratory
  Systems, 42 (1998), pp.~93--103.

\bibitem{berry1995using}
{\sc M.~W. Berry, S.~T. Dumais, and G.~W. O’Brien}, {\em Using linear algebra
  for intelligent information retrieval}, SIAM Review, 37 (1995), pp.~573--595.

\bibitem{bischof1987wy}
{\sc C.~Bischof and C.~Van~Loan}, {\em The {WY} representation for products of
  {Householder} matrices}, SIAM Journal on Scientific and Statistical
  Computing, 8 (1987), pp.~s2--s13.

\bibitem{businger1965linear}
{\sc P.~Businger and G.~H. Golub}, {\em Linear least squares solutions by
  {Householder} transformations}, Numerische Mathematik, 7 (1965),
  pp.~269--276.

\bibitem{cai2010singular}
{\sc J.-F. Cai, E.~J. Cand{\`e}s, and Z.~Shen}, {\em A singular value
  thresholding algorithm for matrix completion}, SIAM Journal on Optimization,
  20 (2010), pp.~1956--1982.

\bibitem{candes2011robust}
{\sc E.~J. Cand{\`e}s, X.~Li, Y.~Ma, and J.~Wright}, {\em Robust principal
  component analysis?}, Journal of the ACM (JACM), 58 (2011), pp.~11:1--11:37.

\bibitem{candes2009exact}
{\sc E.~J. Cand{\`e}s and B.~Recht}, {\em Exact matrix completion via convex
  optimization}, Foundations of Computational Mathematics, 9 (2009),
  pp.~717--772.

\bibitem{carroll1970analysis}
{\sc J.~D. Carroll and J.-J. Chang}, {\em Analysis of individual differences in
  multidimensional scaling via an {N}-way generalization of "{Eckart-Young}"
  decomposition}, Psychometrika, 35 (1970), pp.~283--319.

\bibitem{davis2011university}
{\sc T.~A. Davis and Y.~Hu}, {\em The university of florida sparse matrix
  collection}, ACM Transactions on Mathematical Software (TOMS), 38 (2011),
  pp.~1:1--1:25.

\bibitem{de1998matrix}
{\sc L.~De~Lathauwer and B.~De~Moor}, {\em From matrix to tensor: Multilinear
  algebra and signal processing}, in Mathematics in Signal Processing IV, J.
  McWhirter and E. I. Proudler, eds., Clarendon Press, Oxford, UK, 1998,
  pp.~1--15.

\bibitem{de2000multilinear}
{\sc L.~De~Lathauwer, B.~De~Moor, and J.~Vandewalle}, {\em A multilinear
  singular value decomposition}, SIAM journal on Matrix Analysis and
  Applications, 21 (2000), pp.~1253--1278.

\bibitem{de2000best}
{\sc L.~De~Lathauwer, B.~De~Moor, and J.~Vandewalle}, {\em On the best rank-1
  and rank-({$R_1,R_2,\cdots,R_N$}) approximation of higher-order tensors},
  SIAM journal on Matrix Analysis and Applications, 21 (2000), pp.~1324--1342.

\bibitem{de2004dimensionality}
{\sc L.~De~Lathauwer and J.~Vandewalle}, {\em Dimensionality reduction in
  higher-order signal processing and rank-({$R_1,R_2,\cdots,R_N$}) reduction in
  multilinear algebra}, Linear Algebra and its Applications, 391 (2004),
  pp.~31--55.

\bibitem{de2008tensor}
{\sc V.~De~Silva and L.-H. Lim}, {\em Tensor rank and the ill-posedness of the
  best low-rank approximation problem}, SIAM Journal on Matrix Analysis and
  Applications, 30 (2008), pp.~1084--1127.

\bibitem{drineas2006fast}
{\sc P.~Drineas, R.~Kannan, and M.~W. Mahoney}, {\em Fast {Monte Carlo}
  algorithms for matrices {II}: Computing a low-rank approximation to a
  matrix}, SIAM Journal on computing, 36 (2006), pp.~158--183.

\bibitem{duersch2017randomized}
{\sc J.~A. Duersch and M.~Gu}, {\em Randomized {QR} with column pivoting}, SIAM
  Journal on Scientific Computing, 39 (2017), pp.~C263--C291.

\bibitem{eckart1936approximation}
{\sc C.~Eckart and G.~Young}, {\em The approximation of one matrix by another
  of lower rank}, Psychometrika, 1 (1936), pp.~211--218.

\bibitem{elden2007matrix}
{\sc L.~Eld{\'e}n}, {\em Matrix Methods in Data Mining and Pattern
  Recognition}, SIAM, Philadelphia, 2007.

\bibitem{fazel2002matrix}
{\sc M.~Fazel}, {\em Matrix Rank Minimization with Applications}, PhD thesis,
  Stanford University, Stanford, CA, 2002.

\bibitem{fazel2003log}
{\sc M.~Fazel, H.~Hindi, and S.~P. Boyd}, {\em Log-det heuristic for matrix
  rank minimization with applications to {Hankel} and {Euclidean} distance
  matrices}, in Proceedings of the 2003 American Control Conference, vol.~3,
  2003, pp.~2156--2162.

\bibitem{furnas1988information}
{\sc G.~W. Furnas, S.~Deerwester, S.~T. Dumais, T.~K. Landauer, R.~A. Harshman,
  L.~A. Streeter, and K.~E. Lochbaum}, {\em Information retrieval using a
  singular value decomposition model of latent semantic structure}, in
  Proceedings of the 11th Annual International ACM SIGIR Conference on Research
  and Development in Information Retrieval, ACM, 1988, pp.~465--480.

\bibitem{goldberg2001eigentaste}
{\sc K.~Goldberg, T.~Roeder, D.~Gupta, and C.~Perkins}, {\em Eigentaste: A
  constant time collaborative filtering algorithm}, Information Retrieval, 4
  (2001), pp.~133--151.

\bibitem{golub1965numerical}
{\sc G.~Golub}, {\em Numerical methods for solving linear least squares
  problems}, Numerische Mathematik, 7 (1965), pp.~206--216.

\bibitem{golub2012matrix}
{\sc G.~H. Golub and C.~F. Van~Loan}, {\em Matrix Computations}, Johns Hopkins
  University Press, 3rd~ed., 2012.

\bibitem{gu2015subspace}
{\sc M.~Gu}, {\em Subspace iteration randomization and singular value
  problems}, SIAM Journal on Scientific Computing, 37 (2015), pp.~A1139--A1173.

\bibitem{gu1996efficient}
{\sc M.~Gu and S.~C. Eisenstat}, {\em Efficient algorithms for computing a
  strong rank-revealing {QR} factorization}, SIAM Journal on Scientific
  Computing, 17 (1996), pp.~848--869.

\bibitem{hale2008fixed}
{\sc E.~T. Hale, W.~Yin, and Y.~Zhang}, {\em Fixed-point continuation for
  $\ell_1$-minimization: Methodology and convergence}, SIAM Journal on
  Optimization, 19 (2008), pp.~1107--1130.

\bibitem{halko2011finding}
{\sc N.~Halko, P.-G. Martinsson, and J.~A. Tropp}, {\em Finding structure with
  randomness: Probabilistic algorithms for constructing approximate matrix
  decompositions}, SIAM Review, 53 (2011), pp.~217--288.

\bibitem{harshman1970foundations}
{\sc R.~A. Harshman}, {\em Foundations of the {PARAFAC} procedure: Models and
  conditions for an "explanatory" multimodal factor analysis}, UCLA Working
  Papers in Phonetics, 16 (1970), pp.~1--84,
  \url{http://www.psychology.uwo.ca/faculty/harshman/wpppfac0.pdf}.

\bibitem{herlocker1999algorithmic}
{\sc J.~L. Herlocker, J.~A. Konstan, A.~Borchers, and J.~Riedl}, {\em An
  algorithmic framework for performing collaborative filtering}, in Proceedings
  of the 22nd Annual International ACM SIGIR Conference on Research and
  Development in Information Retrieval, ACM, 1999, pp.~230--237.

\bibitem{horn1991topics}
{\sc R.~A. Horn and C.~R. Johnson}, {\em Topics in Matrix Analysis}, Cambridge
  University Press, 1991.

\bibitem{HuckabyChan2003}
{\sc D.~Huckaby and T.~F. Chan}, {\em On the convergence of {S}tewart's {QLP}
  algorithm for approximating the {SVD}}, Numerical Algorithms, 32 (2003),
  pp.~287--316.

\bibitem{HuckabyChan2005}
{\sc D.~Huckaby and T.~F. Chan}, {\em Stewart's pivoted {QLP} decomposition for
  low-rank matrices}, Numerical Linear Algebra with Applications, 12 (2005),
  pp.~153--159.

\bibitem{jolliffe1986principal}
{\sc I.~T. Jolliffe}, {\em Principal Component Analysis}, Springer-Verlag, New
  York, 1986.

\bibitem{kleinberg1999authoritative}
{\sc J.~M. Kleinberg}, {\em Authoritative sources in a hyperlinked
  environment}, Journal of the ACM (JACM), 46 (1999), pp.~604--632.

\bibitem{kolda2001orthogonal}
{\sc T.~G. Kolda}, {\em Orthogonal tensor decompositions}, SIAM Journal on
  Matrix Analysis and Applications, 23 (2001), pp.~243--255.

\bibitem{kolda2009tensor}
{\sc T.~G. Kolda and B.~W. Bader}, {\em Tensor decompositions and
  applications}, SIAM Review, 51 (2009), pp.~455--500.

\bibitem{larsen1998lanczos}
{\sc R.~M. Larsen}, {\em Lanczos bidiagonalization with partial
  reorthogonalization}, DAIMI Report Series, 27 (1998).

\bibitem{Propack1998}
{\sc R.~M. Larsen}, {\em {PROPACK} - software for large and sparse {SVD}
  calculations}, 1998, \url{http://sun.stanford.edu/~ rmunk/PROPACK/}.

\bibitem{lecun1998gradient}
{\sc Y.~LeCun, L.~Bottou, Y.~Bengio, and P.~Haffner}, {\em Gradient-based
  learning applied to document recognition}, Proceedings of the IEEE, 86
  (1998), pp.~2278--2324.

\bibitem{lehoucq1998arpack}
{\sc R.~B. Lehoucq, D.~C. Sorensen, and C.~Yang}, {\em ARPACK Users' Guide:
  Solution of Large-Scale Eigenvalue Problems with Implicitly Restarted Arnoldi
  Methods}, SIAM, Philadelphia, 1998.

\bibitem{lin2010augmented}
{\sc Z.~Lin, M.~Chen, and Y.~Ma}, {\em The augmented {Lagrange} multiplier
  method for exact recovery of corrupted low-rank matrices}, Sep. 2010,
  \url{https://arxiv.org/abs/1009.5055}.

\bibitem{linial1995geometry}
{\sc N.~Linial, E.~London, and Y.~Rabinovich}, {\em The geometry of graphs and
  some of its algorithmic applications}, Combinatorica, 15 (1995),
  pp.~215--245.

\bibitem{liu2013nuclear}
{\sc Z.~Liu, A.~Hansson, and L.~Vandenberghe}, {\em Nuclear norm system
  identification with missing inputs and outputs}, Systems \& Control Letters,
  62 (2013), pp.~605--612.

\bibitem{liu2009interior}
{\sc Z.~Liu and L.~Vandenberghe}, {\em Interior-point method for nuclear norm
  approximation with application to system identification}, SIAM Journal on
  Matrix Analysis and Applications, 31 (2009), pp.~1235--1256.

\bibitem{ma2011fixed}
{\sc S.~Ma, D.~Goldfarb, and L.~Chen}, {\em Fixed point and {Bregman} iterative
  methods for matrix rank minimization}, Mathematical Programming, 128 (2011),
  pp.~321--353.

\bibitem{mesbahi1997rank}
{\sc M.~Mesbahi and G.~P. Papavassilopoulos}, {\em On the rank minimization
  problem over a positive semidefinite linear matrix inequality}, IEEE
  Transactions on Automatic Control, 42 (1997), pp.~239--243.

\bibitem{muller2004singular}
{\sc N.~Muller, L.~Magaia, and B.~M. Herbst}, {\em Singular value
  decomposition, eigenfaces, and {3D} reconstructions}, SIAM Review, 46 (2004),
  pp.~518--545.

\bibitem{narwaria2012svd}
{\sc M.~Narwaria and W.~Lin}, {\em {SVD}-based quality metric for image and
  video using machine learning}, IEEE Transactions on Systems, Man, and
  Cybernetics, Part B (Cybernetics), 42 (2012), pp.~347--364.

\bibitem{oh2015fast}
{\sc T.-H. Oh, Y.~Matsushita, Y.-W. Tai, and I.~So~Kweon}, {\em Fast randomized
  singular value thresholding for nuclear norm minimization}, in Proceedings of
  the IEEE Conference on Computer Vision and Pattern Recognition, 2015,
  pp.~4484--4493.

\bibitem{quintana1998blas}
{\sc G.~Quintana-Ort{\'\i}, X.~Sun, and C.~H. Bischof}, {\em A {BLAS-3} version
  of the {QR} factorization with column pivoting}, SIAM Journal on Scientific
  Computing, 19 (1998), pp.~1486--1494.

\bibitem{recht2010guaranteed}
{\sc B.~Recht, M.~Fazel, and P.~A. Parrilo}, {\em Guaranteed minimum-rank
  solutions of linear matrix equations via nuclear norm minimization}, SIAM
  Review, 52 (2010), pp.~471--501.

\bibitem{rennie2005fast}
{\sc J.~D. Rennie and N.~Srebro}, {\em Fast maximum margin matrix factorization
  for collaborative prediction}, in Proceedings of the 22nd International
  Conference on Machine Learning, ACM, 2005, pp.~713--719.

\bibitem{rokhlin2009randomized}
{\sc V.~Rokhlin, A.~Szlam, and M.~Tygert}, {\em A randomized algorithm for
  principal component analysis}, SIAM Journal on Matrix Analysis and
  Applications, 31 (2009), pp.~1100--1124.

\bibitem{saibaba2016hoid}
{\sc A.~K. Saibaba}, {\em {HOID}: {Higher Order Interpolatory Decomposition}
  for tensors based on {Tucker} representation}, SIAM Journal on Matrix
  Analysis and Applications, 37 (2016), pp.~1223--1249.

\bibitem{savas2007handwritten}
{\sc B.~Savas and L.~Eld{\'e}n}, {\em Handwritten digit classification using
  higher order singular value decomposition}, Pattern recognition, 40 (2007),
  pp.~993--1003.

\bibitem{schreiber1989storage}
{\sc R.~Schreiber and C.~Van~Loan}, {\em A storage-efficient {WY}
  representation for products of {Householder} transformations}, SIAM Journal
  on Scientific and Statistical Computing, 10 (1989), pp.~53--57.

\bibitem{shashua2005non}
{\sc A.~Shashua and T.~Hazan}, {\em Non-negative tensor factorization with
  applications to statistics and computer vision}, in Proceedings of the 22nd
  International Conference on Machine Learning, ACM, 2005, pp.~792--799.

\bibitem{sidiropoulos2000parallel}
{\sc N.~D. Sidiropoulos, R.~Bro, and G.~B. Giannakis}, {\em Parallel factor
  analysis in sensor array processing}, IEEE transactions on Signal Processing,
  48 (2000), pp.~2377--2388.

\bibitem{sorensen2016deim}
{\sc D.~C. Sorensen and M.~Embree}, {\em A {DEIM} induced {CUR} factorization},
  SIAM Journal on Scientific Computing, 38 (2016), pp.~A1454--A1482.

\bibitem{srebro2005maximum}
{\sc N.~Srebro, J.~Rennie, and T.~S. Jaakkola}, {\em Maximum-margin matrix
  factorization}, in Advances in neural information processing systems, 2005,
  pp.~1329--1336.

\bibitem{stewart1999qlp}
{\sc G.~Stewart}, {\em The {QLP} approximation to the singular value
  decomposition}, SIAM Journal on Scientific Computing, 20 (1999),
  pp.~1336--1348.

\bibitem{toh2010accelerated}
{\sc K.-C. Toh and S.~Yun}, {\em An accelerated proximal gradient algorithm for
  nuclear norm regularized linear least squares problems}, Pacific Journal of
  Optimization, 6 (2010), pp.~615--640.

\bibitem{trefethen1997numerical}
{\sc L.~N. Trefethen and D.~Bau~III}, {\em Numerical Linear Algebra}, vol.~50,
  SIAM, Philadelphia, 1997.

\bibitem{tucker1966some}
{\sc L.~R. Tucker}, {\em Some mathematical notes on three-mode factor
  analysis}, Psychometrika, 31 (1966), pp.~279--311.

\bibitem{turk1991eigenfaces}
{\sc M.~Turk and A.~Pentland}, {\em Eigenfaces for recognition}, Journal of
  Cognitive Neuroscience, 3 (1991), pp.~71--86.

\bibitem{vannieuwenhoven2012new}
{\sc N.~Vannieuwenhoven, R.~Vandebril, and K.~Meerbergen}, {\em A new
  truncation strategy for the higher-order singular value decomposition}, SIAM
  Journal on Scientific Computing, 34 (2012), pp.~A1027--A1052.

\bibitem{vasilescu2002multilinear}
{\sc M.~A.~O. Vasilescu and D.~Terzopoulos}, {\em Multilinear analysis of image
  ensembles: Tensorfaces}, in European Conference on Computer Vision, Springer,
  2002, pp.~447--460.

\bibitem{Tensorlab2016}
{\sc N.~Vervliet, O.~Debals, L.~Sorber, M.~V. Barel, and L.~De~Lathauwer}, {\em
  Tensorlab 3.0}, 2016, \url{http://www.tensorlab.net}.

\bibitem{wang2003facial}
{\sc H.~Wang and N.~Ahuja}, {\em Facial expression decomposition}, in
  Proceedings of the 9th IEEE International Conference on Computer Vision
  (ICCV), 2003, pp.~958--965.

\bibitem{xiao2016spectrum}
{\sc J.~Xiao and M.~Gu}, {\em Spectrum-revealing {C}holesky factorization for
  kernel methods}, in Proceedings of the 16th IEEE International Conference on
  Data Mining (ICDM), 2016, pp.~1293--1298.

\bibitem{xiao2017fast}
{\sc J.~Xiao, M.~Gu, and J.~Langou}, {\em Fast parallel randomized {QR} with
  column pivoting algorithms for reliable low-rank matrix approximations}, in
  Proceedings of the 24th IEEE International Conference on High Performance
  Computing (HiPC), 2017, pp.~233--242.

\bibitem{zhang2001rank}
{\sc T.~Zhang and G.~H. Golub}, {\em Rank-one approximation to high order
  tensors}, SIAM Journal on Matrix Analysis and Applications, 23 (2001),
  pp.~534--550.

\end{thebibliography}
\end{document}